\theoremstyle{plain}
\newtheorem{theorem}{Theorem}%[section]
\newtheorem{lemma}[theorem]{Lemma}%[section]
\newtheorem{corollary}[theorem]{Corollary}%[section]
\theoremstyle{definition}
\newtheorem{definition}[theorem]{Definition}%[section]
\newtheorem{remark}[theorem]{Remark}%[section]
\newtheorem{example}[theorem]{Example}%[section]
\newcommand{\N}{\mathbb{N}} %% Conjunto naturales:     \N
\newcommand{\Z}{\mathbb{Z}} %% Conjunto enteros:       \Z
\newcommand{\R}{\mathbb{R}} %% Conjunto reales:        \R
\newcommand{\C}{\mathbb{C}} %% Conjunto complejos:     \C
\newcommand{\K}{\mathbb{K}} %% Cuerpo		           \K
\newcommand{\cA}{\mathcal{A}}
\newcommand{\cB}{\mathcal{B}}
\newcommand{\cC}{\mathcal{C}}
\newcommand{\cD}{\mathcal{D}}
\newcommand{\cO}{\mathcal{O}}
\newcommand{\cP}{\mathcal{P}}
\newcommand{\cS}{\mathcal{S}}
\newcommand{\fF}{\mathfrak{F}}
\newcommand{\fM}{\mathfrak{M}}
\newcommand{\fS}{\mathfrak{S}}
\newcommand{\card}{\operatorname{card}}
\newcommand{\ldens}{\operatorname{\underline{dens}}}
\newcommand{\eps}{\varepsilon}
\newcommand{\wt}{\widetilde}
\DeclareRobustCommand{\rchi}{{\mathpalette\irchi\relax}}
\newcommand{\irchi}[2]{\raisebox{\depth}{$#1\chi$}}
\title{On the dynamics of weighted composition operators II}
\author{Nilson C. Bernardes Jr., Antonio Bonilla, Jo\~ao V. A. Pinto}
\date{}
\begin{document}

\maketitle

\begin{abstract}
We establish complete characterizations of various notions of expansivity for weighted composition operators 
on a very general class of locally convex spaces of continuous functions.
This class includes several classical classes of continuous function spaces, 
such as the Banach spaces $C_0(X)$ of continuous scalar-valued functions vanishing at infinity on a Hausdorff locally compact space $X$,
endowed with the sup norm, 
and the locally convex spaces $C(X)_c$ of continuous scalar-valued functions on a completely regular space $X$,
endowed with the compact-open topology.
We also obtain complete characterizations of various notions of expansivity for weighted composition operators on $L^p(\mu)$ spaces,
thereby complementing and extending previously known results in the unweighted case.
Finally, we establish a conjugation between weighted and unweighted composition operators in the case of dissipative systems 
on $L^p(\mu)$ spaces and apply it to the study of several dynamical properties.
\end{abstract}

\bigskip\noindent
{\bf Keywords:} Weighted composition operators; Continuous function spaces; $L^p(\mu)$ spaces; 
Expansivity; Dissipative systems; Chaos.

\bigskip\noindent
{\bf 2020 Mathematics Subject Classification:} Primary 47A16, 47B33, 37B05; Secondary 46E10, 46E15, 46E30.

%%%%%%%%%%%%%%%%%%%%%%%%%%%%%%%%%%%%%%%%%%%%%%%%%%%%%%%%%%%%%%%%%%%%%%
%%%%%%%%%%%%%%%%%%%%%%%%%%%%%%%%%%%%%%%%%%%%%%%%%%%%%%%%%%%%%%%%%%%%%%

\section{Introduction}

This work is dedicated to the study of the dynamics of weighted composition operators 
\[
C_{w,f}(\varphi) = w \cdot (\varphi \circ f)
\]
on certain general classes of function spaces and can be considered as a continuation of our recent article~\cite{BerBonPin26}.
Due to the important role played by weighted composition operators in operator theory and its applications, the dynamics of these operators has been extensively investigated by various authors in different contexts over the last few decades.
We refer the reader to the recent articles \cite{CAlvJHen25,AseJorKal25,BerBonPin26,NBerFVas25,KChe24,DGomKGro25},
where many additional references can be found.

Our goal is to investigate various notions of expansivity for weighted composition operators on a very general class of 
continuous function spaces and on $L^p(\mu)$ spaces, as well as several dynamical properties of 
weighted composition operators on $L^p(\mu)$ spaces in the dissipative case.

Given a metric space $M$ with metric $d$, recall that a homeomorphism $h : M \to M$ is said to be {\em expansive} if it admits an {\em expansivity constant} $c > 0$, which means that for any pair $x,y$ of distinct points in $M$, there exists $n \in \Z$ such that
\[
d(h^n(x),h^n(y)) > c.
\]
This concept was introduced by Utz~\cite{WUtz50} and is one of the fundamental concepts in the modern theory of dynamical systems.
In the setting of linear dynamics, Eisenberg~\cite{MEis66} proved that an invertible operator on $\C^n$ is expansive if and only if it has no eigenvalue on the unit circle. 
In the early 1970s, Eisenberg and Hedlund~\cite{MEisJHed70} and Hedlund~\cite{JHed71} studied expansivity for operators on Banach spaces. 
They introduced the notion of uniform expansivity and proved that an invertible operator on a Banach space is uniformly expansive if and only if its approximate point spectrum does not intersect the unit circle. 
In 2000, Mazur~\cite{MMaz00} proved that an invertible normal operator $T$ on a Hilbert space is expansive if and only if $T^*T$ has no eigenvalue on the unit circle. 
This subject was revisited by Bernardes et al.~\cite{BerCirDarMesPuj18} in 2018, where many results were obtained, including complete characterizations of the various notions of expansivity for weighted shifts on classical Banach sequence spaces.
Since then, several works on expansivity for operators have been developed and interesting results have been obtained.
For instance, it was proved that the equalities
\begin{align*}
\text{hyperbolicity } &= \text{ expansivity } + \text{ shadowing}\\
  &= \text{ average expansivity } + \text{ average shadowing}
\end{align*}
hold for invertible operators on any Banach space \cite{AlvBerMes21,NBerAMes21,CirGolPuj21}.
The notions of expansivity have been extended to operators on arbitrary Hausdorff locally convex spaces in the recent papers \cite{BerCarDarFavPer25,BerMarRod}, where investigations on these concepts in this much more general setting have been initiated.
Additional references on expansivity in linear dynamics include 
\cite{CAlvJHen25,BagDasOsaPod25,AniDarMai22,KLeeCMor22,MMai22}.

Let us now describe the organization of the article.

In Section~\ref{S-Preliminaries}, we will present the necessary prerequisites for the development of this work.

In Section~\ref{S-ExpCFS}, we will establish complete characterizations of various notions of expansivity 
for weighted composition operators on a very general class of locally convex spaces of continuous functions, 
namely, the spaces $C_\fS(X;Y)$ of all continuous functions $\varphi$ from a completely regular space $X$ 
into a Hausdorff locally convex space $Y$ such that $\varphi(B)$ is a bounded
subset of $Y$ for each $B \in \fS$ ($\fS$ being a given bornology on $X$), endowed with the topology of $\fS$-convergence.
In fact, our characterizations are also valid for certain subspaces of $C_\fS(X;Y)$ that we call {\em admissible}.
Applications to several classical classes of continuous function spaces, including the Banach spaces $C_0(X)$ and the locally convex spaces $C(X)_c$, will be presented.
As specific examples, we will consider weighted shifts and weighted translation operators.

In Section~\ref{S-ExpLp}, we will establish complete characterizations of various notions of expansivity for weighted composition operators on $L^p(\mu)$ spaces, thereby complementing and extending previous characterizations due to Maiuriello \cite{MMai22} in the unweighted case. We will illustrate the general results by presenting applications to weighted shifts and weighted translation operators.

In Section~\ref{S-Dissipative}, we will study the dynamics of weighted composition operators on $L^p(\mu)$ spaces in the dissipative case.
In the unweighted case, dissipative composition operators have been studied in the recent papers
\cite{BerBonPin26, AniDarMai21, AniDarMai22, EAniMMai23, AniMaiSeo24, UDarBPir21, DGomKGro25, MMai22, MMai23}.
These operators behave very much like weighted shifts. 
In fact, it was shown in \cite{AniDarMai22} that any dissipative composition operator 
satisfying a certain technical condition, called {\em bounded distortion}, has a certain specific weighted shift as a factor
and for many dynamical properties, the composition operator has the property if and only if so does the associated weighted shift.
For this reason, these composition operators were called {\em shift-like} in \cite{AniDarMai22}.
It is well-known that weighted shifts on $\ell^p$ spaces are conjugate 
to unweighted shifts on weighted $\ell^p$ spaces \cite[Section~4.1]{KGroAPer11}. 
In a certain sense, one can transfer the weights from the shift to the space.
Our main goal in Section~\ref{S-Dissipative} is to prove that something similar holds for composition operators 
on $L^p(\mu)$ spaces in the dissipative case.
More precisely, we will prove that any dissipative weighted composition operator $C_{w,f}$ on a space $L^p(\mu)$ 
is conjugate to an unweighted composition operator $C_f$ on a space $L^p(\nu)$ by means of an isometric isomorphism,
where the measure $\nu$ is given by a specific formula involving the measure $\mu$ and the weight function $w$.
This allows us to transfer known results about dissipative composition operators to the weighted case, and we will illustrate 
this process with several dynamical properties, including frequent hypercyclicity, Devaney chaos and the shadowing property.

%%%%%%%%%%%%%%%%%%%%%%%%%%%%%%%%%%%%%%%%%%%%%%%%%%%%%%%%%%%%%%%%%%%%%%
%%%%%%%%%%%%%%%%%%%%%%%%%%%%%%%%%%%%%%%%%%%%%%%%%%%%%%%%%%%%%%%%%%%%%%

\section{Preliminaries}\label{S-Preliminaries}

Throughout the article, $\K$ denotes either the field $\R$ of real numbers or the field $\C$ of complex numbers,
$\Z$ denotes the ring of integers, $\N$ denotes the set of all positive integers, and $\N_0 = \N \cup \{0\}$.

Given a seminorm $\|\cdot\|$ on a vector space $Y$, the {\em unit sphere} of $\|\cdot\|$ is defined by
\[
S_{\|\cdot\|} = \{y \in Y : \|y\| = 1\}.
\]
If $Y$ is a normed space with norm $\|\cdot\|$, then we also write $S_Y$ instead of $S_{\|\cdot\|}$.

Let $Y$ be a Hausdorff locally convex space over $\K$.
By an {\em operator} on $Y$, we mean a continuous linear map $T: Y \to Y$.
Such an operator is said to be {\em invertible} if it has a continuous inverse.
We denote by $L(Y)$ (resp.\ $GL(Y)$) the set of all operators (resp.\ invertible operators) on $Y$.
Let $(\|\cdot\|_\alpha)_{\alpha \in I}$ be a family of seminorms that induces the topology of $Y$.
Recall that $T \in GL(Y)$ is said to be {\em topologically expansive} if, for each $y \in Y \backslash \{0\}$, 
there exists $\alpha \in I$ such that 
\[
\sup_{n \in \Z} \|T^n y\|_\alpha = \infty.
\]
Moreover, $T$ is said to be {\em uniformly topologically expansive} if, for each $\alpha \in I$, there exists $\beta \in I$ such that 
we can write $S_{\|\cdot\|_\alpha} = S^+_\alpha \cup S^-_\alpha$, where
\[
\|T^n y\|_\beta \to \infty \text{ uniformly on } S^+_\alpha \text{ as } n \to \infty,
\]
\[
\|T^{-n} y\|_\beta \to \infty \text{ uniformly on } S^-_\alpha \text{ as } n \to \infty.
\]

These concepts of expansivity were introduced in \cite{BerCarDarFavPer25} as generalizations of the classical notions 
of expansivity and uniform expansivity for invertible operators on Banach spaces \cite{MEisJHed70}.
The word ``topologically'' was used to distinguish the above notion of expansivity from the usual one in the metric space setting, 
because they may differ in the case of a metrizable locally convex space (or even a Fr\'echet space) endowed with a compatible invariant 
metric \cite[Example~32]{BerCarDarFavPer25}, although they always coincide in normed spaces.
However, as in \cite{BerMarRod}, we will omit the word ``topologically'' 
and simply write ``expansive'' and ``uniformly expansive'' for the above notions.

A concept of average expansivity for invertible operators on Banach spaces was introduced and investigated in \cite{AlvBerMes21}.
It was extended to the context of Hausdorff locally convex spaces in the recent paper \cite{BerMarRod}, by means of the following
definition: $T \in GL(Y)$ is said to be {\em average expansive} if, for each $y \in Y \backslash \{0\}$, there exists $\alpha \in I$ such that
\[
\sup_{n \in \N} \Big(\frac{1}{2n+1} \sum_{j=-n}^n \|T^j y\|_\alpha \Big) = \infty.
\]

It was observed in \cite[Proposition~2]{BerMarRod} that
\[
\text{uniform expansivity} \ \ \Longrightarrow \ \ \text{average expansivity} \ \ \Longrightarrow \ \ \text{expansivity}.
\]
However, the converses are false even in the Hilbert space setting, as it was shown in \cite[Example~16]{AlvBerMes21}
and \cite[Example~17]{AlvBerMes21}.

For operators that are not necessarily invertible, we have the following variations of the above notions of expansivity
\cite{BerCarDarFavPer25,BerMarRod}: $T \in L(Y)$ is said to be
\begin{itemize}
\item {\em positively expansive} if, for each $y \in Y \backslash \{0\}$, there exists $\alpha \in I$ such that 
  \[
  \sup_{n \in \N} \|T^n x\|_\alpha = \infty.
  \]
\item {\em average positively expansive} if, for each $y \in Y \backslash \{0\}$, there exists $\alpha \in I$ such that
\[
\sup_{n \in \N} \Big(\frac{1}{n} \sum_{j=0}^{n-1} \|T^j y\|_\alpha \Big) = \infty.
\]
\item {\em uniformly positively expansive} if, for each $\alpha \in I$, there exists $\beta \in I$ such that
\[
\|T^n x\|_\beta \to \infty \text{ uniformly on } S_{\|\cdot\|_\alpha} \text{ as } n \to \infty.
\]
\end{itemize}

All the above notions of expansivity do not depend on the choice of the family of seminorms inducing the topology of $Y$,
so they are completely determined by the topology of $Y$.

%%%%%%%%%%%%%%%%%%%%%%%%%%%%%%%%%%%%%%%%%%%%%%%%%%%%%%%%%%%%%%%%%%%%%%
%%%%%%%%%%%%%%%%%%%%%%%%%%%%%%%%%%%%%%%%%%%%%%%%%%%%%%%%%%%%%%%%%%%%%%

\section{Expansive weighted composition operators on locally convex spaces of continuous functions}\label{S-ExpCFS}

Throughout this section, we fix a completely regular space $X$ and a bornology $\fS$ on $X$, that is, 
a collection of subsets of $X$ that contains the singletons and is stable under the passage to subsets and the formation of finite unions.
We also fix a Hausdorff locally convex space $Y$ over~$\K$, $Y \neq \{0\}$, 
and a family $(\|\cdot\|_\alpha)_{\alpha \in I}$ of seminorms that induces its topology.
$C(X;Y)$ denotes the vector space over $\K$ of all continuous functions from $X$ into $Y$, and $C_\fS(X;Y)$ is the vector subspace 
of $C(X;Y)$ consisting of all $\varphi \in C(X;Y)$ such that $\varphi(B)$ is a bounded subset of $Y$ for each $B \in \fS$.
We consider $C_\fS(X;Y)$ endowed with the topology of $\fS$-convergence (i.e., the topology of uniform convergence on the sets of $\fS$),
which is the Hausdorff locally convex topology induced by the family of seminorms given by
\[
\|\varphi\|_{B,\alpha} = \sup_{x \in B} \|\varphi(x)\|_\alpha \ \ \ (\varphi \in C_\fS(X;Y) \text{ and } (B,\alpha) \in \fS \times I).
\]
Note that if $\cB$ is a base of the bornology $\fS$ (i.e., $\cB \subset \fS$ and each element of $\fS$ is contained in an element of $\cB$), then the subfamily $(\|\cdot\|_{B,\alpha})_{(B,\alpha) \in \cB \times I}$ also induces the topology of $C_\fS(X;Y)$.
For each collection $\mathscr{C}$ of subsets of $X$ and each $B \subset X$, we define
\[
\mathscr{C}_B = \{A \in \mathscr{C} : A \cap B \neq \varnothing\}.
\]
For each $B \subset X$ and each bounded function $\phi : B \to \K$, we define
\[
\|\phi\|_B = \sup_{x \in B} |\phi(x)|,
\]
where we consider this supremum to be $0$ if $B = \varnothing$.
We also fix a homeomorphism $f : X \to X$ satisfying $f(\fS) = \fS$ and a zero-free continuous function $w : X \to \K$ 
such that both $w$ and $1/w$ are bounded on each set $B \in \fS$.
It is easy to check that the {\em weighted composition operator}
\[
C_{w,f}(\varphi) = w \cdot (\varphi \circ f)
\]
is a well-defined invertible operator on $C_\fS(X;Y)$ with
\[
(C_{w,f})^{-1} = C_{\wt{w},f^{-1}}, \ \ \text{ where } \wt{w} = \frac{1}{w \circ f^{-1}}\cdot
\]
We associate to $w$ and $f$ the following bilateral sequence of continuous functions from $X$ into $\K$:
$w^{(0)} = 1$, $w^{(1)} = w$, $w^{(-1)} = \wt{w}$ and, for each $n \geq 2$,
\[
w^{(n)} = w \cdot (w \circ f) \cdots (w \circ f^{n-1}) \ \ \text{ and } \ \
w^{(-n)} = \wt{w} \cdot (\wt{w} \circ f^{-1}) \cdots (\wt{w} \circ f^{-(n-1)}).
\]
Finally, we say that a subspace $M$ of $C_\fS(X;Y)$ is {\em admissible} if the following conditions hold:
\begin{itemize}
\item [(C1)] $C_{w,f}(M) = M$;
\item [(C2)] For each point $a \in X$, each neighborhood $O$ of $a$ in $X$ and each vector $y \in Y$, 
  there exists a continuous function $\phi : X \to [0,1]$ such that
  \[
  \phi(a) = 1, \ \ \ \phi(X \backslash O) = \{0\} \ \ \text{ and } \ \ \phi \otimes y \in M,
  \]
  where $\phi \otimes y : X \to Y$ is given by $(\phi \otimes y)(x) = \phi(x)y$.
\end{itemize} 

We now establish the main result of this section, namely, characterizations of the various notions of expansivity 
for invertible weighted composition operators on admissible subspaces of $C_\fS(X;Y)$.
Note that, since the topological space $X$ is completely regular, it is clear that $C_\fS(X;Y)$ is an admissible subspace of itself.

\begin{theorem}\label{ExpCX-T1}
Let $M$ be an admissible subspace of $C_\fS(X;Y)$, $\cB$ a base of the bornology $\fS$ and $\cO$ a base of the topology of $X$,
with $\varnothing \not\in \cB$ and $\varnothing \not\in \cO$. 
For the invertible weighted composition operator $C_{w,f}$ on $M$, the following statements hold:
\begin{itemize}
\item [\rm (a)] $C_{w,f}$ is expansive if and only if, for each $O \in \cO$, there exists $B \in \cB$ such that
    \[
    \sup_{n \in \Z} \|w^{(n)}\|_{B \cap f^{-n}(O)} = \infty.
    \]
\item [\rm (b)] $C_{w,f}$ is average expansive if and only if, for each $O \in \cO$, there exists $B \in \cB$ such that
    \[
    \sup_{n \in \N} \Big(\frac{1}{2n+1} \sum_{j=-n}^n \|w^{(j)}\|_{B \cap f^{-j}(O)}\Big) = \infty.
    \]
\item [\rm (c)] $C_{w,f}$ is uniformly expansive if and only if, for each $A \in \cB$,
    there exists $B \in \cB$ such that we can write $\cO_A = \cO_A^+ \cup \cO_A^-$, where
    \[
    \lim_{n \to \infty} \inf_{O \in \cO_A^+} \|w^{(n)}\|_{B\cap f^{-n}(O)} = \infty \ \ \text{ and } \ \ 
    \lim_{n \to \infty} \inf_{O \in \cO_A^-} \|w^{(-n)}\|_{B\cap f^{n}(O)} = \infty.
    \]
\end{itemize}
\end{theorem}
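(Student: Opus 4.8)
The plan is to translate each expansivity condition for $C_{w,f}$ into a statement about the seminorms $\|C_{w,f}^n\varphi\|_{B,\alpha}$ and then use the admissibility condition (C2) to reduce testing on all $\varphi \in M \setminus \{0\}$ to testing on the "bump'' functions $\phi \otimes y$. The key computational starting point is the formula
\[
(C_{w,f}^n \varphi)(x) = w^{(n)}(x)\, \varphi(f^n(x)) \qquad (n \in \Z),
\]
which follows by induction from the definitions of $w^{(n)}$ and the fact that $(C_{w,f})^{-1} = C_{\wt w, f^{-1}}$. From this, for any $B \in \fS$ and $\alpha \in I$,
\[
\|C_{w,f}^n \varphi\|_{B,\alpha} = \sup_{x \in B} |w^{(n)}(x)|\, \|\varphi(f^n(x))\|_\alpha = \sup_{x \in f^n(B)} |w^{(n)}(f^{-n}(x))|\, \|\varphi(x)\|_\alpha,
\]
and the identity $w^{(n)}(f^{-n}(x)) = w^{(-n)}(x)^{-1}$ (valid since $w^{(-n)} = \bigl(w^{(n)} \circ f^{-n}\bigr)^{-1}$, which is easy to verify from the definitions) will let me pass between the two forms as convenient. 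The bornological assumption $f(\fS) = \fS$ guarantees $f^n(B) \in \fS$, so all these seminorms are finite.

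For part (a), the easy direction is necessity: given the expansivity condition fails for some $O \in \cO$, I pick $a \in O$, apply (C2) with a fixed nonzero $y \in Y$ and $\alpha$ with $\|y\|_\alpha \neq 0$ to get $\phi \otimes y \in M \setminus \{0\}$ supported in $O$ with $\phi(a)=1$; then for every $B \in \cB$ and every seminorm $\|\cdot\|_{B',\beta}$, one estimates $\|C_{w,f}^n(\phi\otimes y)\|_{B',\beta}$ from above using that $\phi$ is bounded by $1$ and supported in $O$, which forces the relevant sup over $n$ to be finite — contradicting expansivity. For sufficiency, I take $\varphi \in M \setminus\{0\}$, fix $x_0$ with $\varphi(x_0) \neq 0$ and $\alpha$ with $\|\varphi(x_0)\|_\alpha > 0$; by continuity there is a basic neighborhood $O \in \cO$ of $x_0$ on which $\|\varphi(\cdot)\|_\alpha$ stays bounded below by some $\delta>0$; the hypothesis gives $B \in \cB$ with $\sup_n \|w^{(n)}\|_{B \cap f^{-n}(O)} = \infty$, and since $f^{-n}(O)$ is exactly where $f^n$ lands points of $B$ inside $O$, one gets $\|C_{w,f}^n\varphi\|_{B,\alpha} \geq \delta\, \|w^{(n)}\|_{B \cap f^{-n}(O)}$, whose sup over $n \in \Z$ is infinite. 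Parts (b) and (c) follow the same two-sided scheme: for (b) the Cesàro averages commute with these estimates term by term, so the same bump-function argument and the same lower bound go through verbatim with $\frac{1}{2n+1}\sum_{j=-n}^n$ inserted; for (c) the uniform versions require handling the decomposition $\cO_A = \cO_A^+ \cup \cO_A^-$ in tandem with the decomposition $S_{\|\cdot\|_{A,\alpha}} = S^+ \cup S^-$ coming from uniform expansivity, matching a function's membership in $S^\pm$ to which family $\cO_A^\pm$ its "support neighborhood'' is placed in.

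The main obstacle I anticipate is in part (c): making the passage between "uniformly on the unit sphere $S_{\|\cdot\|_{A,\alpha}}$ of $M$'' and "uniformly over $O \in \cO_A^\pm$'' genuinely rigorous, since a single unit-sphere element $\varphi$ interacts with many $O$'s at once and, conversely, the bump functions $\phi \otimes y$ need to be normalized to lie on the sphere while keeping uniform control of $\delta$. The resolution should be to work with a fixed $y \in S_{\|\cdot\|_\alpha}$ (possible because $Y \neq \{0\}$, though one must be slightly careful that $\|\cdot\|_\alpha$ need not be a norm — one can instead normalize in the seminorm $\|\cdot\|_{A,\alpha}$ directly) and to note that for the bump function supported in $O$ the quantity $\|C_{w,f}^{\pm n}(\phi\otimes y)\|_{B,\alpha}$ is controlled below by $\|w^{(\pm n)}\|_{B \cap f^{\mp n}(O)}$ uniformly in the choice of $O$, so the two uniform limits are literally the same quantity up to constants; the reverse direction, extracting from an abstract unit-sphere function a basic open $O$ on which $\|\varphi(\cdot)\|_\alpha$ is bounded below by a constant independent of $\varphi$, will use compactness of $A$-type arguments or a direct covering argument and is where the care with bases $\cB$ and $\cO$ pays off.
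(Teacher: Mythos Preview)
Your proposal is correct and follows essentially the same approach as the paper: bump functions from (C2) for one direction, and continuity plus a lower bound on $\|\varphi(\cdot)\|_\alpha$ over a basic open set for the other. For the point you flag as the main obstacle in (c), no compactness is needed --- since $\|\varphi\|_{A,\alpha}=1$, there is $x_0 \in A$ with $\|\varphi(x_0)\|_\alpha > \tfrac12$, and continuity gives a basic $O_\varphi \in \cO_A$ on which $\|\varphi(\cdot)\|_\alpha \geq \tfrac12$, so the uniform lower bound independent of $\varphi$ is simply $\tfrac12$; this is exactly what the paper does.
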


\begin{proof}
(c): Suppose that $C_{w,f}$ is uniformly expansive. 
Given $(A,\alpha) \in \cB \times I$, there exists $(B,\beta) \in \cB \times I$ such that the unit sphere $S_{\|\cdot\|_{A,\alpha}}$
of the seminorm $\|\cdot\|_{A,\alpha}$ in the space $M$ can be written as
$S_{\|\cdot\|_{A,\alpha}} = S^{+}_{A,\alpha} \cup S^{-}_{A,\alpha}$, where
\begin{equation}\label{ExpCX-Eq1}
\lim_{n \to \infty} \inf_{\varphi \in S^{+}_{A,\alpha}} \|(C_{w,f})^n(\varphi)\|_{B,\beta} = \infty \ \ \text{ and } \ \ 
\lim_{n \to \infty} \inf_{\varphi \in S^{-}_{A,\alpha}} \|(C_{w,f})^{-n}(\varphi)\|_{B,\beta} = \infty.
\end{equation} 
For each $O \in \cO_A$, take a point $x_O \in O \cap A$ and a vector $y_O \in Y$ with $\|y_O\|_\alpha = 1$.
By (C2), there is a continuous function $\phi_O : X \to [0,1]$ such that 
$\phi_O(x_O) = 1$, $\phi_O(X \backslash O) = \{0\}$ and $\varphi_O = \phi_O \otimes y_O \in M$.
Define
\[
\cO_A^+ = \{O \in \cO_A : \varphi_O \in S^{+}_{A,\alpha}\} \ \ \text{ and } \ \
\cO_A^- = \{O \in \cO_A : \varphi_O \in S^{-}_{A,\alpha}\}.
\]
Since $\varphi_O \in S_{\|\cdot\|_{A,\alpha}}$ for all $O \in \cO_A$, we have that
\[
\cO_A = \cO_A^+ \cup \cO_A^-.
\]
For each $O \in \cO_A$ and $n \in \Z$, we have that
\[
\|(C_{w,f})^n(\varphi_O)\|_{B,\beta} = \|w^{(n)} \cdot (\varphi_O \circ f^n)\|_{B \cap f^{-n}(O),\beta} 
  \leq \|w^{(n)}\|_{B \cap f^{-n}(O)} \|y_O\|_\beta.
\]
Hence, for each $n \in \N$,
\begin{equation}\label{ExpCX-Eq2}
\inf_{O \in \cO_A^+} \|w^{(n)}\|_{B\cap f^{-n}(O)} 
  \geq \frac{1}{\|y_O\|_\beta} \inf_{\varphi \in S^{+}_{A,\alpha}} \|(C_{w,f})^n(\varphi)\|_{B,\beta}
\end{equation}
and
\begin{equation}\label{ExpCX-Eq3}
\inf_{O \in \cO_A^-} \|w^{(-n)}\|_{B\cap f^{n}(O)} 
  \geq \frac{1}{\|y_O\|_\beta} \inf_{\varphi \in S^{-}_{A,\alpha}} \|(C_{w,f})^{-n}(\varphi)\|_{B,\beta}.
\end{equation}
By (\ref{ExpCX-Eq1}), (\ref{ExpCX-Eq2}) and (\ref{ExpCX-Eq3}), the limits in (c) are infinite.

Conversely, suppose that the condition in (c) holds. Take $(A,\alpha) \in \cB \times I$.
There exists $B \in \cB$ such that we can write $\cO_A = \cO_A^+ \cup \cO_A^-$, where
\begin{equation}\label{ExpCX-Eq4}
\lim_{n \to \infty} \inf_{O \in \cO_A^+} \|w^{(n)}\|_{B\cap f^{-n}(O)} = \infty \ \ \text{ and } \ \ 
\lim_{n \to \infty} \inf_{O \in \cO_A^-} \|w^{(-n)}\|_{B\cap f^{n}(O)} = \infty.
\end{equation}
For each $\varphi \in S_{\|\cdot\|_{A,\alpha}}$, take $O_\varphi \in \cO_A$ such that
\[
\|\varphi(x)\|_\alpha \geq \frac{1}{2} \ \ \text{ for all } x \in O_\varphi.
\]
Let
\[
S^{+}_{A,\alpha} =\{\varphi \in S_{\|\cdot\|_{A,\alpha}} : O_\varphi \in \cO_A^+\} \ \ \text{ and } \ \
S^{-}_{A,\alpha} =\{\varphi \in S_{\|\cdot\|_{A,\alpha}} : O_\varphi \in \cO_A^-\}.
\]
Clearly, 
\[
S_{\|\cdot\|_{A,\alpha}} = S^{+}_{A,\alpha} \cup S^{-}_{A,\alpha}.
\] 
For each $\varphi \in S_{\|\cdot\|_{A,\alpha}}$ and $n \in \Z$, we have that
\[
\|(C_{w,f})^n(\varphi)\|_{B,\alpha} \geq \|w^{(n)} \cdot (\varphi \circ f^n)\|_{B \cap f^{-n}(O_\varphi),\alpha} 
  \geq \frac{1}{2}\, \|w^{(n)}\|_{B \cap f^{-n}(O_\varphi)}.
\]
Hence, for each $n \in \N$,
\begin{equation}\label{ExpCX-Eq5}
\inf_{\varphi \in S^{+}_{A,\alpha}} \|(C_{w,f})^n(\varphi)\|_{B,\alpha} 
  \geq \frac{1}{2} \inf_{O \in \cO_A^+} \|w^{(n)}\|_{B\cap f^{-n}(O)}
\end{equation}
and
\begin{equation}\label{ExpCX-Eq6}
\inf_{\varphi \in S^{-}_{A,\alpha}} \|(C_{w,f})^{-n}(\varphi)\|_{B,\alpha} 
  \geq \frac{1}{2} \inf_{O \in \cO_A^-} \|w^{(-n)}\|_{B\cap f^n(O)}.
\end{equation}
By (\ref{ExpCX-Eq4}), (\ref{ExpCX-Eq5}) and (\ref{ExpCX-Eq6}),
\[
\lim_{n \to \infty} \inf_{\varphi \in S^{+}_{A,\alpha}} \|(C_{w,f})^n(\varphi)\|_{B,\alpha} = \infty \ \text{ and } \ 
\lim_{n \to \infty} \inf_{\varphi \in S^{-}_{A,\alpha}} \|(C_{w,f})^{-n}(\varphi)\|_{B,\alpha} = \infty,
\]
which proves that $C_{w,f}$ is uniformly expansive.

\smallskip\noindent
(b): Suppose that $C_{w,f}$ is average expansive. Given $O \in \cO$, choose a point $a \in O$ and a vector $y \in Y \backslash \{0\}$.
By (C2), there is a continuous function $\phi : X \to [0,1]$ such that 
$\phi(a) = 1$, $\phi(X \backslash O) = \{0\}$ and $\varphi = \phi \otimes y \in M$.
Take $(B,\alpha) \in \cB \times I$ such that
\begin{equation}\label{ExpCX-Eq7}
\sup_{n \in \N} \Big(\frac{1}{2n+1} \sum_{j=-n}^n \|(C_{w,f})^j(\varphi)\|_{B,\alpha}\Big) = \infty.
\end{equation}
For each $n \in \Z$, we have that
\begin{equation}\label{ExpCX-Eq8}
\|(C_{w,f})^n(\varphi)\|_{B,\alpha} = \|w^{(n)} \cdot (\varphi \circ f^n)\|_{B \cap f^{-n}(O),\alpha} 
  \leq \|w^{(n)}\|_{B \cap f^{-n}(O)} \|y\|_\alpha.
\end{equation}
By (\ref{ExpCX-Eq7}) and (\ref{ExpCX-Eq8}), we obtain
\[
\sup_{n \in \N} \Big(\frac{1}{2n+1} \sum_{j=-n}^n \|w^{(j)}\|_{B \cap f^{-j}(O)}\Big) = \infty,
\]
as it was to be shown.

Conversely, suppose that the condition in (b) holds.
Given $\varphi \in M \backslash \{0\}$, there exist $C > 0$, $\alpha \in I$ and $O \in \cO$ such that
\[
\|\varphi(x)\|_\alpha \geq C \ \ \text{ for all } x \in O.
\]
By hypothesis, there exists $B \in \cB$ such that
\begin{equation}\label{ExpCX-Eq9}
\sup_{n \in \N} \Big(\frac{1}{2n+1} \sum_{j=-n}^n \|w^{(j)}\|_{B \cap f^{-j}(O)}\Big) = \infty.
\end{equation}
For each $n \in \Z$, we have that
\begin{equation}\label{ExpCX-Eq10}
\|(C_{w,f})^n(\varphi)\|_{B,\alpha} \geq \|w^{(n)} \cdot (\varphi \circ f^n)\|_{B \cap f^{-n}(O),\alpha} 
  \geq C\, \|w^{(n)}\|_{B \cap f^{-n}(O)}.
\end{equation}
By (\ref{ExpCX-Eq9}) and (\ref{ExpCX-Eq10}), we obtain
\[
\sup_{n \in \N} \Big(\frac{1}{2n+1} \sum_{j=-n}^n \|(C_{w,f})^j(\varphi)\|_{B,\alpha}\Big) = \infty,
\]
which shows that $C_{w,f}$ is average expansive. 

\smallskip\noindent
(a): The proof is similar to that of (b).
\end{proof}

\begin{example}\label{ExpCX-Ex1}
If $Y = \K$ and $\fS$ is the bornology of all subsets of $X$, then $C_\fS(X;Y)$ is the Banach space $C_b(X)$ of all bounded continuous functions $\varphi : X \to \K$ endowed with the supremum norm
\[
\|\varphi\| = \sup_{x \in X} |\varphi(x)|,
\]
which induces the topology of uniform convergence.
By setting $\cB = \{X\}$ and choosing a base $\cO$ of the topology of $X$ consisting of nonempty open sets, 
Theorem~\ref{ExpCX-T1} tell us that the following characterizations hold for the invertible weighted composition operator 
$C_{w,f}$ on $C_b(X)$:
\begin{itemize}
\item [\rm (a)] $C_{w,f}$ is expansive if and only if
    \[
    \sup_{n \in \Z} \|w^{(n)}\|_{f^{-n}(O)} = \infty \ \text{ for all } O \in \cO.
    \]
\item [\rm (b)] $C_{w,f}$ is average expansive if and only if 
    \[
    \sup_{n \in \N} \Big(\frac{1}{2n+1} \sum_{j=-n}^n \|w^{(j)}\|_{f^{-j}(O)}\Big) = \infty \ \text{ for all } O \in \cO.
    \]
\item [\rm (c)] $C_{w,f}$ is uniformly expansive if and only if $\cO = \cO^+ \cup \cO^-$, where
\[
\lim_{n \to \infty} \inf_{O \in \cO^+} \|w^{(n)}\|_{f^{-n}(O)} = \infty \ \ \text{ and } \ \ 
\lim_{n \to \infty} \inf_{O \in \cO^-} \|w^{(-n)}\|_{f^{n}(O)} = \infty.
\]
\end{itemize}
\end{example}

\begin{example}\label{ExpCX-Ex2}
If $X$ is a locally compact Hausdorff space, then the classical Banach space $C_0(X)$ of all continuous functions 
$\varphi : X \to \K$ that vanish at infinity is an admissible subspace of $C_b(X)$.
Hence, we can regard $C_{w,f}$ as an operator on $C_0(X)$ and the characterizations of the various notions of expansivity
in this case are exactly as in the previous example. 
Moreover, we can choose $\cO$ as a base of relatively compact open sets in $X$.
As a concrete example, we have the {\em bilateral weighted translation operators}
\[
T_w : \varphi \in C_0(\R) \mapsto w(\cdot) \varphi(\cdot + 1) \in C_0(\R),
\]
which correspond to $X = \R$ with its usual topology and $f(x) = x+1$ for all $x \in \R$.
In this particular case, we can take $\cO$ as the countable set of all nonempty open intervals in $\R$ with rational endpoints, for instance.
\end{example}

\begin{example}\label{ExpCX-Ex3}
If $Y = \K$ and $\fS$ is the bornology of all compact subsets of $X$, then $C_\fS(X;Y)$ is the classical locally convex space 
$C(X)_c$ of all continuous functions $\varphi : X \to \K$ endowed with the topology of compact convergence (or compact-open topology).
By choosing any base $\cB$ of the bornology $\fS$ and any base $\cO$ of the topology of $X$
with $\varnothing \not\in \cB$ and $\varnothing \not\in \cO$, 
we have that the characterizations (a), (b) and (c) given in Theorem~\ref{ExpCX-T1} 
hold for the invertible weighted composition operator $C_{w,f}$ on $C(X)_c$.
In particular, if there exists a nonempty open set $W$ in $X$ such that each compact set $K$ in $X$ intersects $f^n(W)$
for only finitely many indices $n \in \Z$, then $C_{w,f}$ cannot be expansive. 
For instance, weighted shifts $B_w$ on the product space $\K^\Z$ and weighted translation operators $T_w$
on the locally convex space $C(\R)_c$ are never expansive. 
\end{example}

\begin{example}\label{ExpCX-Ex4}
If $Y = \K$ and $\fS$ is the bornology of all finite subsets of $X$, then $C_\fS(X;Y)$ is the classical locally convex space 
$C(X)_s$ of all continuous functions $\varphi : X \to \K$ endowed with the topology of pointwise convergence.
By choosing any base $\cO$ of the topology of $X$ consisting of nonempty open sets, it follows from Theorem~\ref{ExpCX-T1} that 
the following characterizations hold for the invertible weighted composition operator $C_{w,f}$ on $C(X)_s$:
\begin{itemize}
\item [\rm (a)] $C_{w,f}$ is expansive if and only if, for each $O \in \cO$, there exists $b \in X$ such that
    \[
    \sup_{n \in J} |w^{(n)}(b)| = \infty,
    \]
    where $J = \{n \in \Z : f^n(b) \in O\}$.
\item [\rm (b)] $C_{w,f}$ is average expansive if and only if, for each $O \in \cO$, there exists $b \in X$ such that
    \[
    \sup_{n \in \N} \Big(\frac{1}{2n+1} \sum_{j \in J_n} |w^{(j)}(b)|\Big) = \infty,
    \]
    where $J_n = \{j \in \{-n,\ldots,n\} : f^n(b) \in O\}$ ($n \in \N$).
\item [\rm (c)] $C_{w,f}$ is uniformly expansive if and only if, for each $a \in X$,
    there exists $B \subset X$ finite such that we can write $\cO_{\{a\}} = \cO_{\{a\}}^+ \cup \cO_{\{a\}}^-$, where
    \[
    \lim_{n \to \infty} \inf_{O \in \cO_{\{a\}}^+} \|w^{(n)}\|_{B \cap f^{-n}(O)} = \infty \ \ \text{ and } \ \ 
    \lim_{n \to \infty} \inf_{O \in \cO_{\{a\}}^-} \|w^{(-n)}\|_{B \cap f^{n}(O)} = \infty.
    \]
\end{itemize}
\end{example}

In the next result, we will see that certain additional hypotheses imply that it is sufficient to test the conditions in (a) and (b) 
of Theorem~\ref{ExpCX-T1} for a smaller collection of sets $O$ in $\cO$.

\begin{corollary}\label{ExpCX-C1}
Let $M$, $\cB$ and $\cO$ be as in Theorem~\ref{ExpCX-T1}. Suppose that both $w$ and $1/w$ are bounded on $X$ 
and that  there exists a nonempty open set $W$ in $X$ such that $\bigcup_{n \in \Z} f^n(W)$ is dense in $X$. 
For the invertible weighted composition operator $C_{w,f}$ on $M$, the following statements hold:
\begin{itemize}
\item [\rm (a)] $C_{w,f}$ is expansive if and only if, for each $O \in \cO$ with $O \subset W$, there exists $B \in \cB$ such that
    \[
    \sup_{n \in \Z} \|w^{(n)}\|_{B \cap f^{-n}(O)} = \infty.
    \]
\item [\rm (b)] $C_{w,f}$ is average expansive if and only if, for each $O \in \cO$ with $O \subset W$, there exists $B \in \cB$ such that
    \[
    \sup_{n \in \N} \Big(\frac{1}{2n+1} \sum_{j=-n}^n \|w^{(j)}\|_{B \cap f^{-j}(O)}\Big) = \infty.
    \]
\end{itemize}
\end{corollary}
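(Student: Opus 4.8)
The plan is to deduce the corollary from Theorem~\ref{ExpCX-T1} by a reduction argument. In each item the ``only if'' implication is trivial, since the condition of Theorem~\ref{ExpCX-T1}(a) (resp.\ (b)) is required for every $O\in\cO$, hence in particular for those contained in $W$. So the whole point is the ``if'' implication, and by Theorem~\ref{ExpCX-T1} it suffices to prove that the restricted hypothesis (the stated condition holding only for $O\in\cO$ with $O\subset W$) already forces the corresponding condition of Theorem~\ref{ExpCX-T1} to hold for an \emph{arbitrary} $O'\in\cO$.

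First I would fix $O'\in\cO$ and use the density of $\bigcup_{n\in\Z}f^n(W)$ to find $m\in\Z$ with $f^m(W)\cap O'\neq\varnothing$; then $W\cap f^{-m}(O')$ is a nonempty open subset of $X$, so there is $O\in\cO$ with $\varnothing\neq O\subset W\cap f^{-m}(O')$, and since $O\subset W$ the restricted hypothesis applies to this $O$, providing a $B\in\cB$ for which the relevant expression diverges. Next I would transport the growth of the weights along the shift by $m$. Set $M=\max\{\sup_{x\in X}|w(x)|,\,\sup_{x\in X}|1/w(x)|\}$, which is finite by hypothesis; from the definition of $w^{(m)}$ one checks that $|w^{(m)}(x)|\geq M^{-|m|}$ for all $x\in X$. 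Using the cocycle identity $w^{(j+m)}=w^{(j)}\cdot(w^{(m)}\circ f^{j})$, valid for all $j\in\Z$, one gets, for every $j\in\Z$ and every $x\in B\cap f^{-j}(O)$, that $x\in B\cap f^{-(j+m)}(O')$ and $|w^{(j+m)}(x)|\geq M^{-|m|}\,|w^{(j)}(x)|$; taking suprema over $x\in B\cap f^{-j}(O)\subset B\cap f^{-(j+m)}(O')$ yields
\[
\|w^{(j+m)}\|_{B\cap f^{-(j+m)}(O')}\ \geq\ M^{-|m|}\,\|w^{(j)}\|_{B\cap f^{-j}(O)}\qquad(j\in\Z).
\]

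For item (a), taking the supremum over $j\in\Z$ of this inequality and re-indexing shows that $\sup_{k\in\Z}\|w^{(k)}\|_{B\cap f^{-k}(O')}=\infty$, which is exactly the condition of Theorem~\ref{ExpCX-T1}(a) for $O'$. For item (b), I would sum the inequality over $j\in\{-n,\dots,n\}$ and re-index $l=j+m$, bounding $\sum_{j=-n}^{n}\|w^{(j)}\|_{B\cap f^{-j}(O)}$ by $M^{|m|}\sum_{l=-(n+|m|)}^{n+|m|}\|w^{(l)}\|_{B\cap f^{-l}(O')}$ (the window $[m-n,m+n]$ being contained in $[-(n+|m|),n+|m|]$ and all summands being nonnegative); then, since $\frac{1}{2n+1}\leq(2|m|+1)\cdot\frac{1}{2(n+|m|)+1}$, dividing by $2n+1$ and letting $n\to\infty$ transfers the divergence of the Cesàro averages from $O$ to $O'$, up to the fixed constant $M^{|m|}(2|m|+1)$, giving the condition of Theorem~\ref{ExpCX-T1}(b) for $O'$. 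An application of Theorem~\ref{ExpCX-T1} then yields expansivity, resp.\ average expansivity. I expect the only slightly delicate point to be the bookkeeping with the shifted averaging windows in (b); everything else is a direct consequence of the cocycle identity together with the uniform two-sided bound $M^{-|m|}\leq|w^{(m)}(x)|\leq M^{|m|}$ afforded by the boundedness of $w$ and $1/w$ on $X$.
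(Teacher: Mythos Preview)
Your proposal is correct and follows essentially the same route as the paper: reduce to Theorem~\ref{ExpCX-T1}, use density of $\bigcup_{n\in\Z}f^n(W)$ to find a basic open set inside $W$ mapped into the given $O'$ by some $f^m$, and then transport the growth of $\|w^{(j)}\|$ along the shift by $m$ using the two-sided boundedness of $w$ together with the set inclusion $B\cap f^{-j}(O)\subset B\cap f^{-(j+m)}(O')$. The only cosmetic difference is that you package the set inclusion and weight comparison into a single pointwise estimate via the cocycle identity $w^{(j+m)}=w^{(j)}\cdot(w^{(m)}\circ f^{j})$, whereas the paper separates them into two steps (first enlarging the domain, then using $\|w^{(j+m)}\|_A\leq C^{|m|}\|w^{(j)}\|_A$); your handling of the shifted averaging window in (b) is likewise the same as the paper's, just with the normalizing constant made explicit.
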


\begin{proof}
We shall present the proof of (b) (the proof of (a) is simpler and is left to the reader).
Since the necessity of the condition follows trivially from Theorem~\ref{ExpCX-T1}, let us prove its sufficiency.
Take an arbitrary $O \in \cO$. Since $\bigcup_{n \in \Z} f^n(W)$ is dense in $X$, there exists $k \in \Z$ such that
$O \cap f^k(W) \neq \varnothing$.
Hence, there exists $O' \in \cO$ such that $O' \subset f^{-k}(O) \cap W$.
Since $O' \subset W$, the condition in (b) gives $B \in \cB$ such that
\[
\sup_{n \in \N} \Big(\frac{1}{2n+1} \sum_{j=-n}^n \|w^{(j)}\|_{B \cap f^{-j}(O')}\Big) = \infty,
\]
which implies that
\begin{equation}\label{ExpCX-Eq11}
\sup_{n \in \N} \Big(\frac{1}{2n+1} \sum_{j=-n}^n \|w^{(j)}\|_{B \cap f^{-(j+k)}(O)}\Big) = \infty.
\end{equation}
By hypothesis, there exists a constant $C > 1$ such that $C^{-1} \leq |w(x)| \leq C$ for all $x \in X$.
A simple computation shows that
\[
\|w^{(j+m)}\|_A \leq C^{|m|} \|w^{(j)}\|_A \ \ \text{ for all } A \in \cB \text{ and } j,m \in \Z.
\]
Thus,
\[
\sum_{j=-n}^n \|w^{(j)}\|_{B \cap f^{-(j+k)}(O)} \leq C^{|k|} \sum_{j=-n}^{n} \|w^{(j+k)}\|_{B \cap f^{-(j+k)}(O)}
\leq C^{|k|} \sum_{j=-(n+|k|)}^{n+|k|} \|w^{(j)}\|_{B \cap f^{-j}(O)}.
\]
In view of (\ref{ExpCX-Eq11}), we conclude that 
\[
\sup_{n \in \N} \Big(\frac{1}{2n+1} \sum_{j=-n}^n \|w^{(j)}\|_{B \cap f^{-j}(O)}\Big) = \infty.
\]
Hence, by Theorem~\ref{ExpCX-T1}, $C_{w,f}$ is average expansive.
\end{proof}

\begin{example}\label{ExpCX-Ex5}
Consider a {\em bilateral weighted backward shift}
\[
B_w : (x_n)_{n \in \Z} \in c_0(\Z) \mapsto (w_n x_{n+1})_{n \in \Z} \in c_0(\Z),
\]
where $w = (w_n)_{n \in \Z}$ is a bounded sequence of scalars with $\inf_{n \in \Z} |w_n| > 0$.
Note that $B_w$ can be regarded as the weighted composition operator $C_{w,f}$ on $C_0(X)$ 
corresponding to $X = \Z$ with the discrete topology and $f : n \in \Z \mapsto n+1 \in \Z$.
In this case, the set $W = \{0\}$ satisfies the condition required in Corollary~\ref{ExpCX-C1}.
Since
\[
\|w^{(n)}\|_{f^{-n}(\{0\})} = |w_{-n} \cdots w_{-1}| \ \ \text{ and } \ \ 
\|w^{(-n)}\|_{f^{n}(\{0\})} = |w_0 \cdots w_{n-1}|^{-1} \ \ \ (n \geq 1),
\]
we conclude that:
\begin{itemize}
\item $B_w$ is expansive if and only if
  \[
  \sup_{n \in \N} |w_{-n} \cdots w_{-1}| = \infty \ \ \text{ or } \ \ \sup_{n \in \N}  |w_1 \cdots w_n|^{-1} = \infty.
  \]
\item $B_w$ is average expansive if and only if
  \[
  \sup_{n \in \N} \Big(\frac{1}{n} \sum_{j=1}^n |w_{-j} \cdots w_{-1}|\Big) = \infty \ \ \text{ or } \ \ 
  \sup_{n \in \N} \Big(\frac{1}{n} \sum_{j=1}^n \frac{1}{|w_1 \cdots w_j|}\Big) = \infty.
  \]
\end{itemize}
In this way, we recovered the characterizations of expansivity and average expansivity obtained 
in~\cite{BerCirDarMesPuj18} and~\cite{AlvBerMes21}, respectively.
The characterization of uniform expansivity was also obtained in~\cite{BerCirDarMesPuj18}
and can be recovered from Example~\ref{ExpCX-Ex1}(c). 
In fact, we have that 
\begin{itemize}
\item $B_w$ is uniformly expansive if and only if there is a partition $(I,J)$ of $\Z$ such that
  \[
  \lim_{n \to \infty} \inf_{k \in I} |w_{k-n+1} \cdots w_k| = \infty \ \ \text{ and } \ \
  \lim_{n \to \infty} \inf_{k \in J} |w_{k+1} \cdots w_{k+n}|^{-1} = \infty,
  \]
  where the infimum over the empty set is defined as $\infty$.
\end{itemize}
With the help of \cite[Lemma~31]{BerCirDarMesPuj18}, we have that the existence of such a partition $(I,J)$ is equivalent to
the fact that one of the partitions $(\varnothing,\Z)$, $(\Z,\varnothing)$ or $(-\N,\N_0)$ does the job,
which corresponds to the characterization given in \cite{BerCirDarMesPuj18}.
\end{example}

\begin{remark}\label{ExpCX-R1}
For notions of expansivity that do not require the operator to be invertible, 
we need to assume that $f$, $w$ and $M$ satisfy only the following conditions:
\begin{itemize}
\item $f : X \to X$ is a continuous function satisfying $f(\fS) \subset \fS$,
\item $w : X \to \K$ is a continuous function which is bounded on each set $B \in \fS$,
\item $C_{w,f}(M) \subset M$ and (C2) holds.
\end{itemize}
Under these hypotheses, by choosing any base $\cB$ of the bornology $\fS$ and any base $\cO$ of the topology of $X$
with $\varnothing \not\in \cB$ and $\varnothing \not\in \cO$, we have that 
the following characterizations hold for the weighted composition operator $C_{w,f}$ on $M$:

\begin{itemize}
\item [\rm (a)] $C_{w,f}$ is positively expansive if and only if, for each $O \in \cO$, there exists $B \in \cB$ such that
    \[
    \sup_{n \in \N} \|w^{(n)}\|_{B \cap f^{-n}(O)} = \infty.
    \]
\item [\rm (b)] $C_{w,f}$ is average positively expansive if and only if, for each $O \in \cO$, there exists $B \in \cB$ such that
    \[
    \sup_{n \in \N} \Big(\frac{1}{n} \sum_{j=0}^{n-1} \|w^{(j)}\|_{B \cap f^{-j}(O)}\Big) = \infty.
    \]
\item [\rm (c)] $C_{w,f}$ is uniformly positively expansive if and only if, for each $A \in \cB$, there exists $B \in \cB$ such that 
\[
\lim_{n \to \infty} \inf_{O \in \cO_A} \|w^{(n)}\|_{B \cap f^{-n}(O)} = \infty.
\]
\end{itemize}
Examples analogous to the ones given above can also be given in the non-invertible case, but we omit the details. 
Moreover, if $X$ is a locally compact Hausdorff space, then we can choose $\cO$ to be a base of relatively compact open sets in $X$.
\end{remark}

%%%%%%%%%%%%%%%%%%%%%%%%%%%%%%%%%%%%%%%%%%%%%%%%%%%%%%%%%%%%%%%%%%%%%%
%%%%%%%%%%%%%%%%%%%%%%%%%%%%%%%%%%%%%%%%%%%%%%%%%%%%%%%%%%%%%%%%%%%%%%

\section{Expansive weighted composition operators on $L^p(\mu)$}\label{S-ExpLp}

Throughout this section, we fix a real number $p \in [1,\infty)$ and an arbitrary positive measure space $(X,\fM,\mu)$,
unless otherwise specified. We denote by $\fF$ the set of all measurable sets $B \in \fM$ satisfying $0 < \mu(B) < \infty$.
$L^p(\mu)$ denotes the Banach space over $\K$ of all $\K$-valued $p$-integrable functions on $(X,\fM,\mu)$ 
endowed with the $p$-norm
\[
\|\varphi\|_p = \Big(\int_X |\varphi|^p d\mu\Big)^{\frac{1}{p}}.
\]
We fix a {\em weight function} $w : X \to \K$, that is, a measurable map satisfying
\[ 
\varphi \cdot w \in L^p(\mu) \ \text{ for all } \varphi \in L^p(\mu).
\]
We also fix a bimeasurable map $f : X \to X$ (i.e., $f(B) \in \fM$ and $f^{-1}(B) \in \fM$ whenever $B \in \fM$)
for which the {\em weighted composition operator}
\[
C_{w,f}(\varphi) = w \cdot (\varphi \circ f)
\]
is a well-defined bounded linear operator on $L^p(\mu)$. 
It is not difficult to show that this is the case if and only if there is a constant $c > 0$ such that
\[ 
\int_B |w|^p d\mu \leq c\, \mu(f(B)) \ \text{ for every } B \in \fM.
\]

Since we are primarily interested in invertible operators, we will also assume that $f$ is bijective and that $\wt{w} = \frac{1}{w \circ f^{-1}}$
is a well-defined measurable map on $X$ satisfying
\[ 
\varphi \cdot \wt{w} \in L^p(\mu) \ \text{ for all } \varphi \in L^p(\mu)
\]
and
\[
\int_B |\wt{w}|^p d\mu \leq \tilde{c}\, \mu(f^{-1}(B)) \ \text{ for every } B \in \fM,
\]
where $\tilde{c} > 0$ is a constant. In this way, $C_{w,f}$ is an invertible operator on $L^p(\mu)$ with 
\[
(C_{w,f})^{-1} = C_{\wt{w},f^{-1}}.
\]

We associate to $p$, $w$ and $f$ the following bilateral sequence of positive measures on $(X,\fM)$:
\[
\mu_n(B) = \int_B |w^{(n)}|^p d\mu \ \ \ \ \ (B \in \fM, n \in \Z),
\]
where $w^{(0)} = 1$, $w^{(1)} = w$, $w^{(-1)} = \wt{w}$,
\[
w^{(n)} = (w \circ f^{n-1}) \cdots (w \circ f) \cdot w \ \text{ and } \ 
w^{(-n)} = (\wt{w} \circ f^{-(n-1)}) \cdots (\wt{w} \circ f^{-1}) \cdot \wt{w} \ \ \ (n \geq 2).
\]

We now establish the main result of this section, namely,
characterizations of the various notions of expansivity for invertible weighted composition operators on $L^p(\mu)$.

\begin{theorem}\label{ExpLp-T1}
For the invertible weighted composition operator $C_{w,f}$ on $L^p(\mu)$, the following statements hold:
\begin{itemize}
\item [\rm (a)] $C_{w,f}$ is expansive if and only if 
    \[ 
    \sup_{n \in \Z} \mu_n(f^{-n}(B)) = \infty \ \text{ for all } B \in \fF.
    \]
\item [\rm (b)] $C_{w,f}$ is average expansive if and only if
   \[
    \sup_{n \in \N} \Big(\frac{1}{2n+1} \sum_{j=-n}^n \mu_j(f^{-j}(B))^\frac{1}{p}\Big) = \infty \ \text{ for all } B \in \fF.
    \]
\item [\rm (c)] $C_{w,f}$ is uniformly expansive if and only if $\fF = \fF^{+} \cup \fF^{-}$, where
    \[
    \lim_{n \to \infty} \inf_{B \in \fF^+} \frac{\mu_n(f^{-n}(B))}{\mu(B)} = \infty \ \ \text{ and } \ \
    \lim_{n \to \infty} \inf_{B \in \fF^-} \frac{\mu_{-n}(f^n(B))}{\mu(B)} = \infty.
    \]
\end{itemize}
\end{theorem}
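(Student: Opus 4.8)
The plan is to reduce each notion of expansivity --- stated a priori for all nonzero vectors, or for the whole unit sphere, of $L^p(\mu)$ --- to the same condition tested only on characteristic functions $\rchi_B$ with $B\in\fF$ (and on their normalizations $\mu(B)^{-1/p}\rchi_B$ in the uniform case). Everything hinges on one elementary identity. Writing $\lambda_m(B):=\mu_m(f^{-m}(B))$ for $m\in\Z$ --- which, in view of the definitions above, is $\mu_n(f^{-n}(B))$ for $m=n\ge0$ and $\mu_{-n}(f^{n}(B))$ for $m=-n$ --- each $\lambda_m$ is a positive measure on $(X,\fM)$, and for every measurable $\psi\ge0$ one has $\int_X\psi\,d\lambda_m=\int_X(\psi\circ f^m)\,|w^{(m)}|^p\,d\mu$ (check it on $\psi=\rchi_B$, then extend through simple functions and monotone convergence). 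Applying this to $\psi=|\varphi|^p$ yields
\[
\|(C_{w,f})^m\varphi\|_p^p=\int_X|\varphi|^p\,d\lambda_m\qquad(\varphi\in L^p(\mu),\ m\in\Z),
\]
and in particular $\|(C_{w,f})^m\rchi_B\|_p^p=\mu_m(f^{-m}(B))$ and $\|(C_{w,f})^m(\mu(B)^{-1/p}\rchi_B)\|_p^p=\mu_m(f^{-m}(B))/\mu(B)$, which are precisely the quantities appearing on the right-hand sides of (a), (b) and (c).

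For (a) and (b) both implications are then immediate. Necessity follows by feeding the nonzero vector $\varphi=\rchi_B$ ($B\in\fF$, so $\rchi_B\in L^p(\mu)\setminus\{0\}$) into the definition of expansivity, resp.\ average expansivity. For sufficiency, given $\varphi\in L^p(\mu)\setminus\{0\}$ I would choose $c>0$ with $B:=\{|\varphi|>c\}$ satisfying $0<\mu(B)<\infty$ (positivity because $\varphi$ is not zero almost everywhere, finiteness by Chebyshev's inequality) and use $\|(C_{w,f})^m\varphi\|_p^p\ge c^p\,\mu_m(f^{-m}(B))$ together with the hypothesis applied to this $B$; the two-sided average in (b) is carried along unchanged. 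I would write out (b) and note that (a) is the same argument with the sum removed, as in the proof of Theorem~\ref{ExpCX-T1}.

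The substantive part is the sufficiency direction of (c) --- note first that in a Banach space the seminorm quantifiers in the definition of uniform expansivity collapse, so one needs only a splitting $S_{L^p(\mu)}=S^{+}\cup S^{-}$ with $\|(C_{w,f})^n\varphi\|_p\to\infty$ uniformly on $S^{+}$ and $\|(C_{w,f})^{-n}\varphi\|_p\to\infty$ uniformly on $S^{-}$. Assume $\fF=\fF^{+}\cup\fF^{-}$ with $\epsilon_n:=\inf_{B\in\fF^{+}}\mu_n(f^{-n}(B))/\mu(B)\to\infty$ and $\tilde\epsilon_n:=\inf_{B\in\fF^{-}}\mu_{-n}(f^{n}(B))/\mu(B)\to\infty$. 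The naive move --- attach to each unit vector $\varphi$ a single superlevel set on which $|\varphi|$ is bounded below, mimicking the continuous-function case --- fails here, because a tall thin spike forces that set to have arbitrarily small measure, so that $\|(C_{w,f})^n\varphi\|_p^p\ge c^p\epsilon_n\mu(B)$ need not tend to $\infty$ uniformly. The remedy is a dyadic decomposition: for $\varphi\in S_{L^p(\mu)}$ put $A_j=\{2^{-j-1}<|\varphi|\le2^{-j}\}$ for $j\in\Z$; by Chebyshev each $A_j$ with $\mu(A_j)>0$ lies in $\fF$, hence in $\fF^{+}$ or in $\fF^{-}$, and since $\sum_j\int_{A_j}|\varphi|^p\,d\mu=1$, either at least half of this mass sits on the $A_j$'s lying in $\fF^{+}$ or at least half sits on those in $\fF^{-}$; this alternative defines whether $\varphi\in S^{+}$ or $\varphi\in S^{-}$. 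The gain from the two-sided estimate on each $A_j$ is that $\mu$-mass converts to $\lambda_n$-mass up to a fixed constant: for $A_j\subset\fF^{+}$,
\[
\int_{A_j}|\varphi|^p\,d\lambda_n \ge 2^{-(j+1)p}\,\lambda_n(A_j) \ge 2^{-(j+1)p}\,\epsilon_n\,\mu(A_j) \ge 2^{-p}\,\epsilon_n\int_{A_j}|\varphi|^p\,d\mu,
\]
the last step using $\mu(A_j)\ge2^{jp}\int_{A_j}|\varphi|^p\,d\mu$. Summing over the relevant indices yields $\|(C_{w,f})^n\varphi\|_p^p\ge2^{-(p+1)}\epsilon_n$ for $\varphi\in S^{+}$ and, symmetrically, $\|(C_{w,f})^{-n}\varphi\|_p^p\ge2^{-(p+1)}\tilde\epsilon_n$ for $\varphi\in S^{-}$; both lower bounds are uniform and tend to $\infty$, which is uniform expansivity. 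For the easy necessity direction of (c) I would test uniform expansivity on the normalized characteristic functions $\varphi_B=\mu(B)^{-1/p}\rchi_B$, set $\fF^{+}=\{B\in\fF:\varphi_B\in S^{+}\}$ and $\fF^{-}=\{B\in\fF:\varphi_B\in S^{-}\}$, and read off the two limits from $\|(C_{w,f})^n\varphi_B\|_p^p=\mu_n(f^{-n}(B))/\mu(B)$ and $\|(C_{w,f})^{-n}\varphi_B\|_p^p=\mu_{-n}(f^{n}(B))/\mu(B)$.

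The only delicate point is this dyadic level-set argument in (c); the rest is bookkeeping around the change-of-variables identity, together with the routine remark that every function in $L^p(\mu)$ is supported on a $\sigma$-finite set, so that $\fF$, the sets $A_j$ and the sets $\{|\varphi|>c\}$ behave as expected even when $\mu$ fails to be $\sigma$-finite.
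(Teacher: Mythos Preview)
Your treatment of (a), (b) and the necessity direction of (c) is essentially identical to the paper's: test on (normalized) characteristic functions for one direction, and for the other pick a superlevel set $\{|\varphi|\ge\eps\}\in\fF$ and bound from below.

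The sufficiency direction of (c) is where you diverge. The paper does not decompose a general unit vector directly; instead it works first with the dense set $\cS$ of simple functions $\psi=\sum_{i=1}^r\alpha_i\rchi_{B_i}$ (disjoint $B_i\in\fF$), splits each such $\psi$ into $\psi^+$ (the part supported on those $B_i\in\fF^+$) and $\psi^-$, puts $\psi$ in $\cA^\pm$ according to which piece carries at least half the $L^p$-mass, and obtains $\|(C_{w,f})^{\pm n}\psi\|_p^p\ge C/2$ for $n$ large. A general $\varphi\in S_{L^p(\mu)}$ is then handled by approximation: choose $\varphi_k\in\cS$ with $\varphi_k\to\varphi$, place $\varphi$ in $S^\pm$ if $\varphi_k\in\cA^\pm$ for infinitely many $k$, and pass to the limit along that subsequence. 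Your dyadic level-set argument replaces this two-stage scheme by a single direct decomposition of $\varphi$ itself into the annuli $A_j=\{2^{-j-1}<|\varphi|\le2^{-j}\}$; the two-sided control of $|\varphi|$ on each $A_j$ is exactly what lets you convert $\mu$-mass to $\lambda_n$-mass with a fixed loss $2^{-p}$, yielding the explicit uniform bound $\|(C_{w,f})^{\pm n}\varphi\|_p^p\ge 2^{-(p+1)}\epsilon_n$. Your route avoids the approximation and subsequence step entirely and gives an explicit constant, at the cost of a slightly more hands-on estimate; the paper's route is closer in spirit to standard $L^p$ density arguments and would adapt more readily if one wanted to replace $\fF$ by some other generating class. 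Both are correct and of comparable length.
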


We mention that the proof of item (c) presented below is an adaptation of the original proof given by Maiuriello \cite{MMai22} 
in the unweighted case.

\begin{proof}
(c): Suppose that $\fF = \fF^{+} \cup \fF^{-}$ as in (c).
Let $\cS$ be the set of all simple functions that belong to $S_{L^p(\mu)}$. To each
\[
\psi = \sum_{i=1}^r \alpha_i\, \rchi_{B_i} \in \cS,
\]
where $r \in \N$, $\alpha_1,\ldots,\alpha_r \in \K$ are pairwise distinct and $B_1,\ldots,B_r \in \fF$ are pairwise disjoint, 
we associate the simple functions
\[
\psi^+ = \sum_{1 \leq i \leq r, B_i \in \fF^+} \alpha_i\, \rchi_{B_i} \ \ \text{ and } \ \ 
\psi^- = \sum_{1 \leq i \leq r, B_i \not\in \fF^+} \alpha_i\, \rchi_{B_i}.
\]
Let
\[
\cA^\pm = \{\psi \in \cS : \|\psi^\pm\|_p^p \geq 1/2\}.
\]
Clearly, $\cS = \cA^+ \cup \cA^-$. For each $\varphi \in S_{L^p(\mu)}$, we can choose a sequence 
$(\varphi_k)_{k \in \N}$ in $\cS$ such that $\varphi_k \to \varphi$ in $L^p(\mu)$. Let
\[
S^\pm = \{\varphi \in S_{L^p(\mu)} : \varphi_k \in \cA^\pm \text{ for infinitely many values of } k\}.
\]
Then,
\[
S_{L^p(\mu)} = S^+ \cup S^-.
\]

Now, fix any constant $C > 0$. Since the limits in (c) are infinite, there exists $n_0 \in \N$ such that
\[
\frac{\mu_n(f^{-n}(A))}{\mu(A)} \geq C \ \text{ and } \ \frac{\mu_{-n}(f^n(B))}{\mu(B)} \geq C 
  \ \text{ whenever } A \in \fF^+, B \in \fF^- \text{ and } n \geq n_0.
\]
We claim that
\begin{equation}\label{ExpLp-Eq9}
\|(C_{w,f})^n(\psi)\|_p^p \geq \frac{C}{2} \ \ \text{ whenever } \psi \in \cA^+ \text{ and } n \geq n_0.
\end{equation}
Indeed, for any $\psi = \sum_{i=1}^r \alpha_i\, \rchi_{B_i} \in \cA^+$ (same notation as above) and any $n \geq n_0$, 
\begin{align*}
\|(C_{w,f})^n(\psi)\|_p^p 
  &= \int_X \Big|\sum_{i=1}^r \alpha_i\, \rchi_{f^{-n}(B_i)}\Big|^p |w^{(n)}|^p d\mu
  = \sum_{i=1}^r |\alpha_i|^p \mu_n(f^{-n}(B_i))\\
  &\geq \sum_{1 \leq i \leq r, B_i \in \fF^+} |\alpha_i|^p \mu_n(f^{-n}(B_i))
  \geq C \sum_{1 \leq i \leq r, B_i \in \fF^+} |\alpha_i|^p \mu(B_i)\\
  &= C\, \|\psi^+\|_p^p \geq \frac{C}{2}\cdot
\end{align*}
Similarly,
\begin{equation}\label{ExpLp-Eq10}
\|(C_{w,f})^{-n}(\psi)\|_p^p \geq \frac{C}{2} \ \ \text{ whenever } \psi \in \cA^- \text{ and } n \geq n_0.
\end{equation}
If $\varphi \in S^+$ and $n \geq n_0$, it follows from (\ref{ExpLp-Eq9}) that
$\|(C_{w,f})^n(\varphi_k)\|_p^p \geq C/2$ for infinitely many values of $k$.
Hence, by letting $k \to \infty$ along this subsequence, we see that
\[
\|(C_{w,f})^n(\varphi)\|_p^p \geq \frac{C}{2} \ \ \text{ whenever } \varphi \in S^+ \text{ and } n \geq n_0.
\]
Similarly,  it follows from (\ref{ExpLp-Eq10}) that
\[
\|(C_{w,f})^{-n}(\varphi)\|_p^p \geq \frac{C}{2} \ \ \text{ whenever } \varphi \in S^- \text{ and } n \geq n_0.
\]
This proves that 
\[
\|(C_{w,f})^n(\varphi)\|_p \to \infty \text{ uniformly on } S^+ \ \text{ and } \ 
\|(C_{w,f})^{-n}(\varphi)\|_p \to \infty \text{ uniformly on } S^-.
\]
Therefore, $C_{w,f}$ is uniformly expansive.

Conversely, suppose that $C_{w,f}$ is uniformly expansive. Then, $S_{L^p(\mu)} = S^+ \cup S^-$, where 
\begin{equation}\label{ExpLp-Eq8}
\lim_{n \to \infty} \inf_{\varphi \in S^+} \|(C_{w,f})^n(\varphi)\|_p = \infty \ \ \text{ and } \ \ 
\lim_{n \to \infty} \inf_{\varphi \in S^-} \|(C_{w,f})^{-n}(\varphi)\|_p = \infty.
\end{equation}
Let
\[
\fF^+ = \Big\{B \in \fF : \frac{\rchi_B}{\mu(B)^{1/p}} \in S^+\Big\} \ \ \text{ and } \ \ 
\fF^- = \Big\{B \in \fF : \frac{\rchi_B}{\mu(B)^{1/p}} \in S^-\Big\}.
\]
Clearly, $\fF = \fF^+ \cup \fF^-$. Moreover, (\ref{ExpLp-Eq8}) implies that the limits in (c) are infinite.

\smallskip\noindent
(b): Suppose that the supremum in (b) is infinite for all $B \in \fF$. Given $\varphi \in L^p(\mu) \backslash \{0\}$, 
there exists $\eps >0$ such that $A = \{x \in X : |\varphi(x)| \geq \eps\} \in \fF$. Since 
\[
\|(C_{w,f})^n(\varphi)\|_p^p \geq \int_{f^{-n}(A)} |\varphi \circ f^n|^p |w^{(n)}|^p d\mu \geq \eps^p \mu_n(f^{-n}(A))
\ \ \ (n \in \Z),
\]
we obtain
\[
\sup_{n \in \N} \Big(\frac{1}{2n+1} \sum_{j=-n}^n \|(C_{w,f})^j(\varphi)\|_p\Big) \geq
\eps\, \sup_{n \in \N} \Big(\frac{1}{2n+1} \sum_{j=-n}^n \mu_j(f^{-j}(A))^\frac{1}{p}\Big) = \infty,
\]
proving that $C_{w,f}$ is average expansive.

Conversely, suppose that $C_{w,f}$ is average expansive, that is,
\[
\sup_{n \in \N} \Big(\frac{1}{2n+1} \sum_{j=-n}^n \|(C_{w,f})^j(\varphi)\|_p\Big) = \infty 
  \ \text{ for all } \varphi \in L^p(\mu) \backslash \{0\}.
\]
In particular, for each $B \in \fF$, since
\[
\|(C_{w,f})^n(\rchi_B)\|_p^p = \int_X |\rchi_B \circ f^n|^p |w^{(n)}|^p d\mu = \int_{f^{-n}(B)} |w^{(n)}|^p d\mu = \mu_n(f^{-n}(B))
\ \ \ (n \in \Z),
\]
we obtain
\[
\sup_{n \in \N} \Big(\frac{1}{2n+1} \sum_{j=-n}^n \mu_j(f^{-j}(B))^\frac{1}{p}\Big) =
\sup_{n \in \N} \Big(\frac{1}{2n+1} \sum_{j=-n}^n \|(C_{w,f})^j(\rchi_B)\|_p\Big) = \infty.
\]

\noindent
(a): The proof is similar (but slightly simpler) to the proof of (b) and we omit it.
\end{proof}

\begin{example} \label{ExpLp-Ex1}
If $X = \Z$, $\fM = \cP(\Z)$ (the power set of $\Z$), $\mu$ is the counting measure on $\fM$ and $f : n \in \Z \mapsto n+1 \in \Z$, 
then $L^p(\mu) = \ell^p(\Z)$ and $C_{w,f}$ coincides with the {\em bilateral weighted backward shift}
\[
B_w : (x_n)_{n \in \Z} \in \ell^p(\Z) \mapsto (w_nx_{n+1})_{n \in \Z} \in \ell^p(\Z).
\]
In this case, the general characterizations given in Theorem~\ref{ExpLp-T1} can be used to recover the known characterizations
of expansivity, average expansivity and uniform expansivity for $B_w$ obtained in~\cite{AlvBerMes21} and~\cite{BerCirDarMesPuj18},
but we leave the details to the reader.
\end{example}

\begin{example}
If $X = \R$, $\fM$ is the $\sigma$-algebra of all Lebesgue measurable sets in $\R$, $\mu$ is the Lebesgue measure on $\fM$ 
and $f : x \in \R \mapsto x+1 \in \R$, then $L^p(\mu) = L^p(\R)$ and $C_{w,f}$ coincides with the 
{\em bilateral weighted translation operator}
\[
T_w : \varphi \in L^p(\R) \mapsto w(\cdot) \varphi(\cdot + 1) \in L^p(\R).
\]
Theorem~\ref{ExpLp-T1} can be immediately applied to provide characterizations of expansivity, average expansivity 
and uniform expansivity for $T_w$.
\end{example}

\begin{remark}
For notions of expansivity that do not require the operator to be invertible, 
we need not assume that $w$ and $f$ satisfy the additional hypotheses considered in the second paragraph of this section. 
In this case, the following characterizations hold for the weighted composition operator $C_{w,f}$ on $L^p(\mu)$:
\begin{itemize}
\item [\rm (a)] $C_{w,f}$ is positively expansive if and only if 
    \[ 
    \sup_{n \in \N} \mu_n(f^{-n}(B)) = \infty \ \text{ for all } B \in \fF.
    \]
\item [\rm (b)] $C_{w,f}$ is average positively expansive if and only if
   \[ 
    \sup_{n \in \N} \Big(\frac{1}{n} \sum_{j=0}^{n-1} \mu_j(f^{-j}(B))^\frac{1}{p}\Big) = \infty \ \text{ for all } B \in \fF.
    \]
\item [\rm (c)] $C_{w,f}$ is uniformly positively expansive if and only if 
    \[ 
    \lim_{n \to \infty} \inf_{B \in \fF} \frac{\mu_n(f^{-n}(B))}{\mu(B)} = \infty.
    \]
\end{itemize}
These characterizations can be applied to unilateral weighted backward shifts $B_w$ on $\ell^p(\N)$
(recovering the characterizations obtained in~\cite{AlvBerMes21} and~\cite{BerCirDarMesPuj18}) and
to unilateral weighted translation operators $T_w$ on $L^p[1,\infty)$, for instance.
\end{remark}

%%%%%%%%%%%%%%%%%%%%%%%%%%%%%%%%%%%%%%%%%%%%%%%%%%%%%%%%%%%%%%%%%%%%%%
%%%%%%%%%%%%%%%%%%%%%%%%%%%%%%%%%%%%%%%%%%%%%%%%%%%%%%%%%%%%%%%%%%%%%%

\section{Dissipative weighted composition operators on $L^p(\mu)$}\label{S-Dissipative}

Throughout this section, we fix a real number $p \in [1,\infty)$.

\begin{definition}\label{Dis-D1}
A {\em weighted $p$-measurable system} is a $5$-tuple $(X,\fM,\mu,f,w)$ such that the following conditions hold:
\begin{itemize}
\item [(a)] $(X,\fM,\mu)$ is a $\sigma$-finite measure space with $\mu(X) > 0$;
\item [(b)] $f : X \to X$ is a non-singular bimeasurable bijective map, 
  where $f$ {\em non-singular} means that $\mu(f^{-1}(B)) = 0$ if and only if $\mu(B) = 0$;
\item [(c)] $w : X \to \K$ is a bounded measurable map for which there is a constant $c > 0$ such that
  \[
  \int_B |w|^p d\mu \leq c\, \mu(f(B)) \ \text{ for all } B \in \fM.
  \]
\end{itemize}
\end{definition}

Note that condition (c) implies that the weighted composition operator 
\[
C_{w,f}(\varphi) = w \cdot (\varphi \circ f)
\]
is a well-defined bounded linear operator on $L^p(\mu)$.

In the particular case in which $w$ is the constant function $1$, 
we have the known concept of a {\em measurable system} $(X,\fM,\mu,f)$.
Recall that such a system (or the map $f$) is said to be:
\begin{itemize}
\item {\em conservative} if for each measurable set $B$ of positive $\mu$-measure, there exists $n \in \N$ such that 
  $\mu(B \cap f^{-n}(B)) > 0$;
\item {\em dissipative} if there is a measurable set $W$ (called a {\em wandering set}) such that the sets $f^n(W)$, $n \in \Z$, 
  are pairwise disjoint and $X = \bigcup_{n \in \Z} f^n(W)$. 
\end{itemize}
Recall also that a set $A \subset  X$ is said to be $f$-invariant if $ f^{-1}(A) = A$. 
The following classic result can be found in \cite[Theorem 3.2]{UKre85}:

\medskip\noindent
{\bf Hopf Decomposition Theorem.}  {\it If $(X,\fM,\mu)$ is a $\sigma$-finite measure space and $f : X \to X$ is a non-singular
measurable map, then $X$ can be written as the union of two disjoint $f$-invariant sets $\cC(f)$ and $\cD(f)$, 
called the {\em conservative} and the {\em dissipative parts} of $f$, respectively, such that 
$f|_{\cC(f)}$ is conservative and $f|_{\cD(f)}$ is dissipative.}

\medskip
As mentioned in the Introduction, the dynamics of composition operators $C_f$ on $L^p(\mu)$ associated to
dissipative measurable systems has been investigated in a series of recent papers.

\begin{definition}\label{Dis-D2}
We say that a weighted $p$-measurable system $(X,\fM,\mu,f,w)$ is {\em invertible} if 
$\wt{w} = \frac{1}{w \circ f^{-1}}$ is a well-defined bounded measurable map on $X$ such that there is a constant $\tilde{c} > 0$ satisfying
\[
\int_B |\wt{w}|^p d\mu \leq \tilde{c}\, \mu(f^{-1}(B)) \ \text{ for every } B \in \fM.
\]
\end{definition}

In this case, $C_{w,f}$ is an invertible operator on $L^p(\mu)$ with 
\[
(C_{w,f})^{-1} = C_{\wt{w},f^{-1}}.
\]

For the remainder of this section, we fix a dissipative invertible weighted $p$-measurable system $(X,\fM,\mu,f,w)$
with wandering set $W$ satisfying $0 < \mu(W) < \infty$, unless otherwise specified.
Moreover, the functions $w^{(n)}$ and the measures $\mu_n$ are defined exactly as in the previous section.

\begin{theorem}\label{Dis-T1}
If $\nu$ is the measure on $(X,\fM)$ defined by 
\begin{equation}\label{Dis-Eq1}
\nu(B) = \sum_{n \in \Z} \mu_n(B \cap f^{-n}(W)),
\end{equation}
then the weighted composition operator $C_{w,f}$ on $L^p(\mu)$ is conjugate to the composition operator $C_f$ on $L^p(\nu)$
by means of the isometric isomorphism $\Pi : L^p(\mu) \to L^p(\nu)$ given by
\begin{equation}\label{Dis-Eq2}
\Pi(\varphi) =\sum_{n \in \Z}\, \frac{\varphi}{w^{(n)}}\, \rchi_{f^{-n}(W)}\cdot
\end{equation}
\end{theorem}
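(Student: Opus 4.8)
The plan is to check three things, in order: that $\Pi$ is well defined, that it is a surjective isometry, and that it intertwines $C_{w,f}$ with $C_f$. Everything rests on two facts. First, the \emph{partition property}: since $W$ is a wandering set, the family $\{f^{-n}(W):n\in\Z\}$ is exactly the family $\{f^{n}(W):n\in\Z\}$, which by definition of dissipativity is a partition of $X$; consequently $\int_X\phi\,d\mu=\sum_{n\in\Z}\int_{f^{-n}(W)}\phi\,d\mu$ for measurable $\phi\ge0$. Second, the \emph{cocycle identity}
\[
w^{(m)}=w\cdot\bigl(w^{(m-1)}\circ f\bigr)\qquad(m\in\Z),
\]
equivalently $w/w^{(m)}=1/(w^{(m-1)}\circ f)$ wherever defined. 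I would prove this once from the relation $(C_{w,f})^m=C_{w,f}\circ(C_{w,f})^{m-1}$: applying both sides to an arbitrary $\varphi$ and using the elementary formula $(C_{w,f})^n(\varphi)=w^{(n)}\cdot(\varphi\circ f^n)$ for all $n\in\Z$ (immediate by induction in both directions from the definitions of $w^{(n)}$ and of $(C_{w,f})^{\pm1}$) gives $w^{(m)}\cdot(\varphi\circ f^m)=w\cdot(w^{(m-1)}\circ f)\cdot(\varphi\circ f^m)$ for every $\varphi$, hence the identity as functions on $X$. Note also that invertibility of the system forces $w$, and therefore each $w^{(n)}$ with $n\in\Z$, to be bounded and $\mu$-a.e.\ nonzero, so every quotient $\varphi/w^{(n)}$ below is a legitimate measurable function, and $\nu=\sum_{n\in\Z}\mu_n(\,\cdot\cap f^{-n}(W))$ is a genuine ($\sigma$-finite) measure, being a countable sum of measures.

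Next I would establish that $\Pi$ is a surjective isometry via the change-of-variables formula for $\nu$: for measurable $g\ge0$,
\[
\int_X g\,d\nu=\sum_{n\in\Z}\int_{f^{-n}(W)}g\,\bigl|w^{(n)}\bigr|^p\,d\mu,
\]
which is the standard identity $\int g\,d(\sum_n\nu_n)=\sum_n\int g\,d\nu_n$ combined with $d\mu_n=|w^{(n)}|^p\,d\mu$. Since $\Pi(\varphi)=\varphi/w^{(n)}$ on $f^{-n}(W)$, applying this with $g=|\Pi(\varphi)|^p$ yields
\[
\|\Pi(\varphi)\|_{L^p(\nu)}^p=\sum_{n\in\Z}\int_{f^{-n}(W)}\Bigl|\tfrac{\varphi}{w^{(n)}}\Bigr|^p|w^{(n)}|^p\,d\mu=\sum_{n\in\Z}\int_{f^{-n}(W)}|\varphi|^p\,d\mu=\|\varphi\|_{L^p(\mu)}^p,
\]
the last step by the partition property; in particular $\Pi(\varphi)\in L^p(\nu)$, so $\Pi$ is a well-defined linear isometry. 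For surjectivity I would exhibit the inverse explicitly: given $\psi\in L^p(\nu)$, put $\varphi:=\sum_{n\in\Z}w^{(n)}\psi\,\rchi_{f^{-n}(W)}$; the same computation run in reverse gives $\|\varphi\|_{L^p(\mu)}=\|\psi\|_{L^p(\nu)}<\infty$, so $\varphi\in L^p(\mu)$, and clearly $\Pi(\varphi)=\psi$. Hence $\Pi$ is an isometric isomorphism.

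Finally, I would verify the conjugacy $\Pi\circ C_{w,f}=C_f\circ\Pi$ piece by piece on the partition. Fix $\varphi\in L^p(\mu)$ and $m\in\Z$. On $f^{-m}(W)$ one has $\Pi(C_{w,f}\varphi)=(C_{w,f}\varphi)/w^{(m)}=w\cdot(\varphi\circ f)/w^{(m)}$. On the other hand, since $f$ is a bijection, $f\bigl(f^{-m}(W)\bigr)=f^{-(m-1)}(W)$, so for $x\in f^{-m}(W)$ the point $f(x)$ lies in $f^{-(m-1)}(W)$, where $\Pi(\varphi)=\varphi/w^{(m-1)}$; therefore $\bigl(C_f\Pi(\varphi)\bigr)(x)=\Pi(\varphi)(f(x))=\varphi(f(x))/w^{(m-1)}(f(x))$. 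By the cocycle identity $w/w^{(m)}=1/(w^{(m-1)}\circ f)$ these two expressions coincide on $f^{-m}(W)$; letting $m$ range over $\Z$ and using that the sets $f^{-m}(W)$ cover $X$, we conclude $\Pi(C_{w,f}\varphi)=C_f(\Pi\varphi)$, i.e.\ $C_f=\Pi\circ C_{w,f}\circ\Pi^{-1}$ (which, a posteriori, also shows $C_f$ is a well-defined bounded, indeed invertible, operator on $L^p(\nu)$). The proof has no serious obstacle; the only points needing care are the bookkeeping in the cocycle identity across the negative and zero indices, and the index shift $f(f^{-m}(W))=f^{-(m-1)}(W)$ that governs how composition with $f$ moves between the blocks of the partition — once these are in hand the rest is routine.
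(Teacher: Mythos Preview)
Your proof is correct and follows essentially the same route as the paper: the same integration formula for $\nu$, the same isometry and surjectivity computations (with the same explicit inverse), and the same index-shift/cocycle argument for the conjugacy, only written block-by-block on $f^{-m}(W)$ rather than as a global sum. The cocycle identity you isolate is exactly what the paper uses implicitly in passing from $\dfrac{\varphi\circ f}{w^{(n)}\circ f}$ to $\dfrac{w\cdot(\varphi\circ f)}{w^{(n+1)}}$.
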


\begin{figure}[h]
\centering
%\hspace*{1cm}
\begin{tikzpicture}[scale=0.25,>=stealth]

\node[right] at (25,21) {$C_{w,f}$};
  \node[right] at (10,20) {$L^p(\mu)$};
  \node[right] at (40,20) {$L^p(\mu)$};
  \node[right] at (25,11) {$C_{f}$};
\node[right] at (10,10) {$L^p(\nu)$};
  \node[right] at (40,10) {$L^p(\nu)$};

  \draw[double, ->] (15,20) -- (40,20);
  \draw[double, ->] (15,10) -- (40,10);

  \node[right] at (9,15) {$\Pi$};
  \draw[double, ->] (12,18) -- (12,12);
    \node[right] at (39,15) {$\Pi$};
   \draw[double, ->] (42,18) -- (42,12);

  \end{tikzpicture}
\caption{Dissipative composition dynamical system }
\end{figure}

\begin{proof}
Clearly, $\nu$ is a measure on $(X,\fM)$. Moreover,
\begin{equation}\label{Dis-Eq3}
\int_X g\, d\nu = \sum_{n \in \Z} \int_{f^{-n}(W)} g\, |w^{(n)}|^p\, d\mu \ \text{ for every } g \in L^1(\nu).
\end{equation}
Indeed, by linearity, (\ref{Dis-Eq1}) implies (\ref{Dis-Eq3}) for simple functions. 
By using classical approximation arguments, we obtain (\ref{Dis-Eq3}) in general.
It is also clear that $\Pi$ is linear.

Let us prove that $\Pi$ is an isometry. Indeed, for any $\varphi \in L^p(\mu)$,
\begin{align*}
\|\Pi(\varphi)\|_{p}^{p} 
  &=\int_X \Big|\sum_{n \in \Z} \frac{\varphi}{w^{(n)}}\, \rchi_{f^{-n}(W)}\Big|^p\, d\nu \\
  &= \sum_{k \in \Z} \int_{f^{-k}(W)} \Big|\sum_{n \in \Z} \frac{\varphi}{w^{(n)}}\, \rchi_{f^{-n}(W)}\Big|^p |w^{(k)}|^p\, d\mu \\
  &= \sum_{k \in \Z} \int_{f^{-k}(W)} \Big|\frac{\varphi}{w^{(k)}}\Big|^p |w^{(k)}|^p\, d\mu \\
  &= \sum_{k \in \Z} \int_{f^{-k}(W)} |\varphi|^p\, d\mu = \|\varphi\|_{p}^{p}.
\end{align*}

Let us now show that $\Pi$ is surjective. Indeed, given $\psi \in L^p(\nu)$, define 
\[
\varphi =\sum_{n \in \Z} \psi\, w^{(n)} \rchi_{f^{-n}(W)}.
\]
Note that $\varphi \in L^p(\mu)$, since
\begin{align*}
\int_X |\varphi|^p\, d\mu
  &= \int_X \Big|\sum_{n \in \Z} \psi\, w^{(n)} \rchi_{f^{-n}(W)} \Big|^p\, d\mu \\
  &=  \int_X \sum_{n \in \Z} |\psi\, w^{(n)} \rchi_{f^{-n}(W)}|^p\, d\mu \\
  &=  \sum_{n \in \Z} \int_{f^{-n}(W)} |\psi|^p\, |w^{(n)}|^p\, d\mu \\
  &= \int_X |\psi|^p\, d\nu < \infty. 
\end{align*}
Moreover,
\begin{align*} 
\Pi(\varphi)
  &= \sum_{n \in \Z} \frac{1}{w^{(n)}} \Big(\sum_{m \in \Z} \psi\, w^{(m)} \rchi_{f^{-m}(W)}\Big) \rchi_{f^{-n}(W)} \\
  &=\sum_{n \in \Z} \sum_{m \in \Z} \frac{\psi\, w^{(m)}}{w^{(n)}}\, \rchi_{f^{-m}(W)} \rchi_{f^{-n}(W)} \\
  &= \sum_{n \in \Z} \psi\, \rchi_{f^{-n}(W)} = \psi.
\end{align*}

Thus, $\Pi$ is an isometric isomorphism from $L^p(\mu)$ onto $L^p(\nu)$. 

Finally, let us prove that $\Pi \circ C_{w,f} = C_f \circ \Pi$. Take $\varphi \in L^p(\mu)$. On the one hand, 
\[
\Pi (C_{w,f}(\varphi)) = \Pi(w \cdot (\varphi \circ f)) = \sum_{n \in \Z}  \frac{w \cdot (\varphi \circ f)}{w^{(n)}}\, \rchi_{f^{-n}(W)}\cdot
\]
On the other hand, 
\begin{align*}
C_f(\Pi(\varphi))
  &= C_f\Big(\sum_{n \in \Z} \frac{\varphi}{w^{(n)}}\, \rchi_{f^{-n}(W)}\Big)\\
  &= \sum_{n \in \Z} \frac{\varphi \circ f}{w^{(n)}\circ f}\, \rchi_{f^{-(n+1)}(W)}\\
  &= \sum_{n \in \Z} \frac{w \cdot (\varphi \circ f)}{w^{(n+1)}}\, \rchi_{f^{-(n+1)}(W)}\cdot
\end{align*}

This completes the proof.
\end{proof}

The previous theorem provides us with a method for extending results 
about dissipative unweighted composition operators to the weighted case.
We will illustrate this method with the results below. First, let us recall some definitions.
The {\em lower density} of a set $A \subset \N$ is defined by
\[
\ldens(A) = \liminf_{n \to \infty} \frac{\card(A \cap [1,n])}{n}\cdot
\]
Let $T$ be an operator on a Banach space $Y$. For any subsets $U$ and $V$ of $Y$, the {\em return set from $U$ to $V$} is given by
\[
N(U,V) = \{n \in \N_0 : T^n(U) \cap V \neq \varnothing\}.
\]
The operator $T$ is said to be:
\begin{itemize}
\item {\em topologically transitive} (resp.\ {\em topologically mixing}) if $N(U,V)$ is nonempty (resp.\ co-finite) 
  whenever $U$ and $V$ are nonempty open sets in $Y$.
\item {\em Devaney chaotic} if it is topologically transitive and has a dense set of periodic points.
\item {\em hypercyclic} (resp.\ {\em frequently hypercyclic}) if there is a vector $y \in Y$ such that $N(\{y\},V)$ is nonempty 
  (resp.\ has positive lower density) for every nonempty open set $V$ in $Y$.
\item {\em recurrent} (resp.\ {\em frequently recurrent}) if there is dense set of vectors $y \in Y$ with the property that
  $N(\{y\},V)$ is nonempty (resp.\ has positive lower density) for every neighborhood $V$ of $y$ in $Y$.
\item {\em Li-Yorke chaotic} if there is an uncountable set $S \subset Y$ such that each pair $(y,z)$ of distinct points in $S$ satisfies
  \[
  \liminf_{n \to \infty} \|T^n y - T^n z\| = 0 \ \ \text{ and } \ \ \limsup_{n \to \infty} \|T^n y - T^n z\| > 0.
  \]
\end{itemize}
Moreover, we say that an invertible operator $T$ on $Y$ has the {\em shadowing property} if, for every $\eps > 0$, 
there exists $\delta > 0$ such that for every bilateral sequence $(y_j)_{j \in \Z}$ in $Y$ satisfying
\[
\|T y_j - y_{j+1}\| \leq \delta \ \text{ for all } j \in \Z,
\]
there exists $y \in Y$ such that
\[
\|y_j - T^j y\| < \eps \ \text{ for all } j \in \Z.
\]

\smallskip
Recall that $(X,\fM,\mu,f,w)$ is a dissipative invertible weighted $p$-measurable system
with wandering set $W$ satisfying $0 < \mu(W) < \infty$.
Moreover, the measure $\nu$ is as in Theorem~\ref{Dis-T1}.

\begin{theorem}\label{Dis-T2}
$C_{w,f}$ is Devaney chaotic on $L^p(\mu)$ if and only if the following condition holds:
\begin{itemize}
\item For each $\eps > 0$ and $B \in \fM$ with $\nu(B) < \infty$, there exists a measurable set $B' \subset B$ such that 
  \[
  \nu(B \backslash B') < \eps \ \ \text{ and } \ \ \sum _{n\in \Z} \nu (f^{n}(B')) < \infty.
  \]
\end{itemize}
\end{theorem}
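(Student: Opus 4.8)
The plan is to derive Theorem~\ref{Dis-T2} from the conjugacy of Theorem~\ref{Dis-T1}. Devaney chaos (topological transitivity together with a dense set of periodic points) is invariant under topological conjugacy, and $\Pi$ is an isometric isomorphism with $\Pi \circ C_{w,f} = C_f \circ \Pi$, so $C_{w,f}$ is Devaney chaotic on $L^p(\mu)$ if and only if the unweighted composition operator $C_f$ is Devaney chaotic on $L^p(\nu)$. Before working with $C_f$, one checks that $(X,\fM,\nu,f)$ is again a dissipative invertible $p$-measurable system with weight $\equiv 1$ and the same wandering set $W$: the sets $f^n(W)$ are pairwise disjoint with union $X$ by hypothesis (a purely set-theoretic fact); $\nu(W) = \mu_0(W) = \mu(W) \in (0,\infty)$; $\nu$ is $\sigma$-finite (cover $X$ by the sets $f^n(W) \cap E_j$ with $E_j$ of finite $\mu$-measure, on each of which $\nu$ is finite because $w^{(-n)}$ is bounded); since $w \neq 0$ $\mu$-a.e., the measures $\mu$ and $\nu$ are equivalent, so $f$ is non-singular with respect to $\nu$; and $C_f = \Pi \circ C_{w,f} \circ \Pi^{-1}$ is a bounded invertible operator on $L^p(\nu)$. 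It then suffices to prove that $C_f$ is Devaney chaotic on $L^p(\nu)$ if and only if the displayed condition holds, which I will do directly; this is the unweighted counterpart of the statement and is consistent with the known characterizations for dissipative composition operators.

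For the ``if'' direction, assume the condition. To see that the periodic points of $C_f$ are dense, approximate an arbitrary $\psi \in L^p(\nu)$ by a simple function $\sum_{i=1}^r \alpha_i\, \rchi_{B_i}$ with pairwise disjoint $B_i \in \fM$ of finite $\nu$-measure, and for each $i$ pick $B_i' \subset B_i$ as in the condition, so that $\nu(B_i \setminus B_i')$ is small and $\sum_{n \in \Z} \nu(f^n(B_i')) < \infty$. For $q \in \N$, the set $A_q^{(i)} = \bigcup_{k \in \Z} f^{qk}(B_i')$ is $f^q$-invariant with $\nu(A_q^{(i)}) \leq \sum_n \nu(f^n(B_i')) < \infty$, so $\rchi_{A_q^{(i)}} \in L^p(\nu)$ is a period-$q$ point of $C_f$ and $\|\rchi_{A_q^{(i)}} - \rchi_{B_i'}\|_p^p = \nu(A_q^{(i)} \setminus B_i') \leq \sum_{|n| \geq q} \nu(f^n(B_i')) \to 0$ as $q \to \infty$; choosing $q$ large enough (the same for all $i$), $\sum_i \alpha_i\, \rchi_{A_q^{(i)}}$ is a period-$q$ point of $C_f$ arbitrarily close to $\psi$. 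For topological transitivity, note that $\|C_f^n \rchi_{B'}\|_p^p = \nu(f^{-n}(B'))$ and $\|C_f^{-n}\rchi_{B'}\|_p^p = \nu(f^n(B'))$, so for a set $B'$ as in the condition these norms tend to $0$ as $n \to \infty$; given nonempty open sets $U \ni u_0$ and $V \ni v_0$, approximate $u_0$ and $v_0$ by simple functions supported, up to small error, on such sets $B'$, and for large $n$ take $u = (\text{approximant of } u_0) + C_f^{-n}(\text{approximant of } v_0)$, which lies in $U$ while $C_f^n u$ lies in $V$. Hence $C_f$ is Devaney chaotic.

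For the ``only if'' direction, suppose $C_f$ is Devaney chaotic; in particular its periodic points are dense. Fix $B \in \fM$ with $\nu(B) < \infty$ and $\eps > 0$, and pick a period-$q$ point $\varphi$ of $C_f$ with $\|\varphi - \rchi_B\|_p$ as small as we wish. Truncate $\varphi$ to a finite band of the layers: set $E_N = \bigcup_{|j| \leq N} f^j(W)$ and $\psi = \varphi \cdot \rchi_{E_N}$. Then $\psi \to \varphi$ in $L^p(\nu)$ as $N \to \infty$ by dominated convergence, so $\|\psi - \rchi_B\|_p$ can be kept small. The crucial point is that $\psi$ has summable iterate-norms: with $\Phi = \max_{0 \leq s < q} |\varphi \circ f^s| \in L^p(\nu)$ we have $|C_f^n \psi| = |\varphi \circ f^n|\, \rchi_{f^{-n}(E_N)} \leq \Phi\, \rchi_{f^{-n}(E_N)}$, and since each point of $X$ lies in exactly one layer $f^l(W)$ it belongs to $f^{-n}(E_N)$ for precisely $2N+1$ values of $n$; hence $\sum_{n \in \Z} \|C_f^n \psi\|_p^p \leq (2N+1)\|\Phi\|_p^p < \infty$. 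Now put $B' = B \cap \{|\psi| \geq 1/2\} \subset B$: by Chebyshev's inequality $\nu(B \setminus B') \leq 2^p \|\psi - \rchi_B\|_p^p < \eps$ (for $\varphi$ chosen close enough), while $\rchi_{B'} \circ f^{-n} \leq 2\,|\psi \circ f^{-n}| = 2\,|C_f^{-n}\psi|$ gives $\sum_{n \in \Z} \nu(f^n(B')) = \sum_{n \in \Z} \|C_f^{-n}\rchi_{B'}\|_p^p \leq 2^p \sum_{n \in \Z} \|C_f^n \psi\|_p^p < \infty$. This is exactly the stated condition, and transferring it back through $\Pi$ finishes the proof.

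The step I expect to be the main obstacle is the ``only if'' direction: a periodic point of $C_f$ has, by periodicity, non-summable iterate-norms, so one cannot simply pass to a level set of it, and the essential idea is to first cut it down to finitely many of the layers $\{f^j(W)\}_{j \in \Z}$ — where the uniform finite overlap $\sum_n \rchi_{f^{-n}(E_N)} = 2N+1$ of the translated bands forces $\sum_n \|C_f^n \psi\|_p^p$ to converge — and only afterwards threshold by $\{|\psi| \geq 1/2\}$. The ``if'' direction (periodic points from the $f^q$-invariant saturations $A_q^{(i)}$, and transitivity by the standard mass-pushing argument) and the verification that $(X,\fM,\nu,f)$ is an admissible dissipative invertible system are comparatively routine.
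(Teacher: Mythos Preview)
Your proof is correct and follows the same overall strategy as the paper: reduce to the unweighted operator $C_f$ on $L^p(\nu)$ via the conjugacy of Theorem~\ref{Dis-T1}, then invoke the characterization of Devaney chaos for dissipative composition operators. The only difference is that the paper simply cites \cite[Corollary~3.5]{UDarBPir21} for the second step, whereas you reprove that result from scratch (including the verification that $(X,\fM,\nu,f)$ is a legitimate dissipative invertible system). Your direct argument for the ``only if'' direction --- truncating a periodic approximant to the finite band $E_N=\bigcup_{|j|\le N}f^j(W)$ and using the uniform overlap $\sum_{n\in\Z}\rchi_{f^{-n}(E_N)}=2N+1$ to force $\sum_n\|C_f^n\psi\|_p^p<\infty$ before thresholding --- is a clean and correct way to recover the Darji--Pires characterization, so your proof is self-contained while the paper's is shorter by appealing to the literature.
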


\begin{proof}
By Theorem~\ref{Dis-T1}, $C_{w,f}$ is Devaney chaotic on $L^p(\mu)$ if and only if $C_{f}$ is Devaney chaotic on $L^p(\nu)$.
Now, by \cite[Corollary~3.5]{UDarBPir21}, $C_{f}$ is Devaney chaotic on $L^p(\nu)$ if and only if the above condition holds.
\end{proof}

\begin{theorem}\label{Dis-T3}
If 
\[
\sum _{n \in \Z} \mu_n(f^{-n}(W)) < \infty,
\]
then $C_{w,f}$ is Devaney chaotic, topologically mixing and frequently hypercyclic on $L^p(\mu)$.
\end{theorem}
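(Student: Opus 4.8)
The plan is to derive all three conclusions from Theorem~\ref{Dis-T1} by transferring the corresponding known facts about \emph{unweighted} dissipative composition operators. Since the map $\Pi$ of Theorem~\ref{Dis-T1} is an isometric isomorphism — in particular a linear homeomorphism — intertwining $C_{w,f}$ on $L^p(\mu)$ with $C_f$ on $L^p(\nu)$, it is a conjugacy between the two operators. As Devaney chaos, topological mixing and frequent hypercyclicity are conjugacy invariants, it suffices to prove that $C_f$ is Devaney chaotic, topologically mixing and frequently hypercyclic on $L^p(\nu)$.

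The first step is to rewrite the standing hypothesis in terms of $\nu$. Since $W$ is a wandering set, the sets $f^{-k}(W)$, $k \in \Z$, are pairwise disjoint, so in the defining sum $\nu(f^{-n}(W)) = \sum_{k \in \Z}\mu_k\bigl(f^{-n}(W) \cap f^{-k}(W)\bigr)$ coming from (\ref{Dis-Eq1}) only the term $k = n$ survives, giving $\nu(f^{-n}(W)) = \mu_n(f^{-n}(W))$ for every $n \in \Z$. Hence the hypothesis $\sum_{n \in \Z}\mu_n(f^{-n}(W)) < \infty$ is exactly $\sum_{n \in \Z}\nu(f^{-n}(W)) < \infty$, and since $f$ is bijective and $X = \bigcup_{n \in \Z}f^n(W) = \bigcup_{n \in \Z}f^{-n}(W)$, this is the same as $\sum_{n \in \Z}\nu(f^n(W)) < \infty$; in particular $\nu(X) < \infty$. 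As moreover $0 < \nu(W) = \mu(W) < \infty$ and $\nu$ is equivalent to $\mu$ (so $f$ is non-singular for $\nu$), we conclude that $(X,\fM,\nu,f)$ is a dissipative measurable system of finite total mass with wandering set $W$ satisfying $\sum_{n \in \Z}\nu(f^n(W)) < \infty$, and that $C_f$ is an invertible operator on $L^p(\nu)$ (being conjugate to the invertible operator $C_{w,f}$).

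It then remains to apply the known results for such systems. Devaney chaos follows from Theorem~\ref{Dis-T2} applied to $C_f$ on $L^p(\nu)$: given $\eps > 0$ and $B \in \fM$ with $\nu(B) < \infty$, write $B = \bigcup_{m \in \Z}(B \cap f^m(W))$, choose $M$ so large that $\nu(B) - \sum_{|m| \le M}\nu(B \cap f^m(W)) < \eps$, and put $B' = \bigcup_{|m| \le M}(B \cap f^m(W))$; then $\nu(B \backslash B') < \eps$ and $\nu(f^n(B')) \le \sum_{|m| \le M}\nu(f^{n+m}(W))$, whence $\sum_{n \in \Z}\nu(f^n(B')) \le (2M+1)\sum_{k \in \Z}\nu(f^k(W)) < \infty$, so the criterion of Theorem~\ref{Dis-T2} is met. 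For topological mixing and frequent hypercyclicity one invokes the corresponding characterizations (or sufficient conditions) for dissipative composition operators on $L^p$ spaces established in \cite{BerBonPin26, UDarBPir21, DGomKGro25}; alternatively, both are immediate from the Frequent Hypercyclicity Criterion (and its variants yielding chaos and mixing) applied to $C_f$ and $S = C_f^{-1}$ on the dense subspace spanned by the indicators of finite-$\nu$-measure subsets of $\bigcup_{|m| \le M}f^m(W)$, $M \in \N$, using that $\sum_{n \in \Z}\nu(f^{\pm n}(B)) < \infty$ for every such set $B$ by the same computation as above. Transporting the three properties back through $\Pi$ gives the statement for $C_{w,f}$ on $L^p(\mu)$.

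The proof contains no genuinely hard estimate; the one point requiring care is organizational, namely verifying that $\nu$ actually fits the hypotheses under which the cited unweighted results are proved (finiteness, $C_f$ invertible, $W$ still a wandering set for the same $f$) and that the summability condition appearing in those results coincides with $\sum_{n \in \Z}\nu(f^n(W)) < \infty$. I expect this matching of hypotheses, rather than any computation, to be the main thing to get right.
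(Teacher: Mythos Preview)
Your proposal is correct and follows essentially the same approach as the paper: conjugate $C_{w,f}$ on $L^p(\mu)$ to $C_f$ on $L^p(\nu)$ via Theorem~\ref{Dis-T1}, observe that the hypothesis amounts to $\nu(X)<\infty$, and then invoke the known unweighted results. The paper does this in one stroke by citing \cite[Theorem~3.3]{UDarBPir21}, which yields all three properties simultaneously once $\nu(X)<\infty$, whereas you take a slightly longer route (verifying the criterion of Theorem~\ref{Dis-T2} for Devaney chaos and sketching the Frequent Hypercyclicity Criterion for the other two); the extra work you do is not needed but is not wrong.
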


\begin{proof}
By Theorem~\ref{Dis-T1}, $C_{w,f}$ is Devaney chaotic, topologically mixing or frequently hypercyclic on $L^p(\mu) $ if and only if 
$C_{f}$ is Devaney chaotic, topologically mixing or frequently hypercyclic on $L^p(\nu)$, respectively. 
Now, since $\nu(X) = \sum_{n \in \Z} \mu_n(f^{-n}(W)) < \infty$ by hypothesis, it follows from \cite[Theorem~3.3]{UDarBPir21} that
$C_{f}$ is simultaneouly Devaney chaotic, topologically mixing and frequently hypercyclic on $L^p(\nu)$.
\end{proof}

\begin{theorem}\label{Dis-T4}
$C_{w,f}$ is hypercyclic on $L^p(\mu)$ if and only if $C_{w,f}$ is recurrent on $L^p(\mu)$.
\end{theorem}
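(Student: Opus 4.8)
The proof will be short, following the same pattern as Theorems~\ref{Dis-T2} and~\ref{Dis-T3}: transfer the statement through the conjugacy of Theorem~\ref{Dis-T1} and then appeal to the unweighted dissipative case. Since $\Pi : L^p(\mu) \to L^p(\nu)$ is an isometric isomorphism satisfying $\Pi \circ C_{w,f} = C_f \circ \Pi$, and since both hypercyclicity and recurrence are invariant under conjugation by a linear homeomorphism, $C_{w,f}$ is hypercyclic (resp.\ recurrent) on $L^p(\mu)$ if and only if $C_f$ is hypercyclic (resp.\ recurrent) on $L^p(\nu)$. Hence Theorem~\ref{Dis-T4} is equivalent to the assertion that the unweighted dissipative composition operator $C_f$ on $L^p(\nu)$ is hypercyclic if and only if it is recurrent.

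For this reduction to apply one should observe, as was already implicit in Theorems~\ref{Dis-T2} and~\ref{Dis-T3}, that $(X,\fM,\nu,f)$ is again a dissipative (unweighted) $p$-measurable system with the same wandering set $W$: each $\mu_n$ is mutually absolutely continuous with $\mu$ (the functions $w^{(n)}$ being zero-free), hence so is $\nu$, so $f$ remains non-singular; and the boundedness of $w$ and $1/w$ together with the inequalities in Definitions~\ref{Dis-D1}(c) and~\ref{Dis-D2} yield $\mu(f^k(W)) < \infty$, and therefore $\nu(f^k(W)) < \infty$, for every $k \in \Z$, so that $\nu$ is $\sigma$-finite (with $\nu(W) = \mu(W) > 0$). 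Granting this, I would invoke the known equivalence between hypercyclicity and recurrence for dissipative composition operators on $L^p$ spaces from the literature cited at the beginning of this section (for instance \cite{EAniMMai23, AniMaiSeo24}) and pull it back through $\Pi$.

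The only implication carrying real content is recurrent $\Rightarrow$ hypercyclic; the converse is automatic, since every hypercyclic vector is recurrent (a dense orbit, together with the density of the range, must return to every neighbourhood of the vector). The nontrivial implication is exactly the part not reproven here: it rests on the shift-like structure of dissipative composition operators --- the wandering-set decomposition $X = \bigcup_{n \in \Z} f^n(W)$ realizes $C_f$ on $L^p(\nu)$ as a weighted shift, and for such operators a single nonzero recurrent vector already forces the growth and decay of the weights that characterize hypercyclicity. Thus the substance of the theorem sits inside that cited unweighted result, while everything specific to the present paper is the routine transfer via Theorem~\ref{Dis-T1}; the only mild point to be careful about is verifying that $C_f$ on $L^p(\nu)$ genuinely falls within the scope of those results, which is the content of the middle paragraph.
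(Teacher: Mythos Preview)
Your proof is correct and follows exactly the paper's approach: transfer via the conjugacy of Theorem~\ref{Dis-T1} and then cite the unweighted dissipative result, which in the paper is specifically \cite[Theorem~2.2]{AniMaiSeo24}. Your middle paragraph, verifying that $(X,\fM,\nu,f)$ is again a dissipative $p$-measurable system with wandering set $W$ of finite measure, is a welcome addition that the paper leaves implicit.
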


\begin{proof}
By Theorem~\ref{Dis-T1}, $C_{w,f}$ is hypercyclic or recurrent on $L^p(\mu)$ if and only if 
$C_{f}$ is hypercyclic or recurrent on $L^p(\nu)$, respectively. On the other hand, 
by \cite[Theorem 2.2]{AniMaiSeo24}, $C_{f}$ is hypercyclic on $L^p(\nu)$ if and only if $C_{f}$ is recurrent on $L^p(\nu)$.
\end{proof}

Let us now extend the concept of {\em bounded distortion} \cite{AniDarMai21,UDarBPir21} to our context.

\begin{definition}\label{Dis-D3}
We say that the dissipative invertible weighted $p$-measurable system $(X,\fM,\mu,f,w)$ is of {\em bounded distortion} 
on the wandering set $W$ if there is a constant $K > 0$ such that
\begin{equation}\label{Dis-Eq4}
\frac{1}{K}\, \frac{\mu_{-k}(f^k(W))}{\mu(W)} \leq \frac{\mu_{-k}(f^k(B))}{\mu(B)} \leq K\, \frac{\mu_{-k}(f^k(W))}{\mu(W)}
\end{equation}
for all $k \in \Z$ and all measurable set $B \subset W$ with $\mu(B) > 0$. 
\end{definition}

\begin{lemma}\label{Dis-L1}
$(X,\fM,\mu,f,w)$ is of bounded distortion on $W$ if and only if so is $(X,\fM,\nu,f)$.
\end{lemma}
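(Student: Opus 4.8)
The plan is to unravel both bounded‑distortion conditions in terms of the measure $\mu$ and the weight functions $w^{(n)}$, and then show the two resulting systems of inequalities are literally the same (up to an adjustment of the constant $K$). Recall that $(X,\fM,\nu,f)$ being of bounded distortion on $W$ means there is $K' > 0$ such that
\[
\frac{1}{K'}\, \frac{\nu(f^k(W))}{\nu(W)} \leq \frac{\nu(f^k(B))}{\nu(B)} \leq K'\, \frac{\nu(f^k(W))}{\nu(W)}
\]
for all $k \in \Z$ and all measurable $B \subset W$ with $\nu(B) > 0$. So first I would establish the dictionary between $\nu$ and $\mu$ on sets of the form $f^k(B)$ with $B \subset W$.

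The key computation is the following. Since $W$ is a wandering set, the sets $f^n(W)$, $n \in \Z$, are pairwise disjoint, so for $B \subset W$ the set $f^k(B) \subset f^k(W)$ meets $f^{-n}(W)$ only when $n = -k$. Hence, by the defining formula~(\ref{Dis-Eq1}),
\[
\nu(f^k(B)) = \sum_{n \in \Z} \mu_n\big(f^k(B) \cap f^{-n}(W)\big) = \mu_{-k}\big(f^k(B)\big),
\]
and in particular $\nu(f^k(W)) = \mu_{-k}(f^k(W))$ and $\nu(W) = \mu_0(W) = \mu(W)$. Also note that for $B \subset W$ with $\mu(B)>0$ we have $\nu(B) = \mu_0(B) = \mu(B) > 0$, and conversely; so the quantifier ``$\mu(B)>0$'' and ``$\nu(B)>0$'' over measurable $B \subset W$ range over exactly the same sets (using $w$ zero‑free, i.e.\ $w \ne 0$ a.e., which is guaranteed since $\wt w$ is well defined). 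Substituting these identities, the inequality~(\ref{Dis-Eq4}) defining bounded distortion of $(X,\fM,\mu,f,w)$ on $W$,
\[
\frac{1}{K}\, \frac{\mu_{-k}(f^k(W))}{\mu(W)} \leq \frac{\mu_{-k}(f^k(B))}{\mu(B)} \leq K\, \frac{\mu_{-k}(f^k(W))}{\mu(W)},
\]
becomes verbatim
\[
\frac{1}{K}\, \frac{\nu(f^k(W))}{\nu(W)} \leq \frac{\nu(f^k(B))}{\nu(B)} \leq K\, \frac{\nu(f^k(W))}{\nu(W)},
\]
which is exactly the bounded‑distortion condition for $(X,\fM,\nu,f)$ on $W$ (the unweighted composition operator case, $w \equiv 1$, so the subscript drops and $\nu_k = \nu \circ f^{-k}$ plays the role of $\mu_{-k}$ but one checks directly that $\nu(f^k(B)) $ is the relevant quantity). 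The same constant $K$ works in both directions, so the equivalence is immediate.

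The only point requiring a little care — and the main (mild) obstacle — is to make sure the two ``reference'' quantities that appear in Definition~\ref{Dis-D3} really do match after the translation: on the weighted side one compares $\mu_{-k}(f^k(B))/\mu(B)$ with $\mu_{-k}(f^k(W))/\mu(W)$, while the unweighted bounded‑distortion condition for $(X,\fM,\nu,f)$, by definition applied with weight $1$, compares $\nu(f^k(B))/\nu(B)$ with $\nu(f^k(W))/\nu(W)$. I would verify that these are the same ratios via the identities above: $\nu(f^k(B)) = \mu_{-k}(f^k(B))$, $\nu(B) = \mu(B)$, $\nu(f^k(W)) = \mu_{-k}(f^k(W))$, $\nu(W) = \mu(W)$. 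I would also remark that $\nu$ is $\sigma$‑finite and $f$ is non‑singular with respect to $\nu$ (so that $(X,\fM,\nu,f)$ is indeed a legitimate measurable system and Definition~\ref{Dis-D3} applies), which follows from the corresponding properties of $\mu$ together with the fact that $w$ and $1/w$ give rise to the bounded comparisons in Definitions~\ref{Dis-D1} and~\ref{Dis-D2}; alternatively this is already implicit in Theorem~\ref{Dis-T1}, since $\Pi$ is an isometric isomorphism of $L^p(\mu)$ onto $L^p(\nu)$. With all identifications in place, no computation beyond the one displayed above is needed, and the proof is complete.
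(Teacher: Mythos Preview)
Your argument is correct and is essentially identical to the paper's proof: both hinge on the single observation that for measurable $B \subset W$ one has $\nu(f^k(B)) = \mu_{-k}(f^k(B))$ (in particular $\nu(B)=\mu(B)$ and $\nu(f^k(W))=\mu_{-k}(f^k(W))$), so that inequality~(\ref{Dis-Eq4}) is literally the unweighted bounded-distortion condition for $(X,\fM,\nu,f)$. Your extra remarks about $\mu(B)>0 \iff \nu(B)>0$ and about $(X,\fM,\nu,f)$ being a legitimate measurable system are fine clarifications that the paper leaves implicit.
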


\begin{proof}
For any $k \in \Z$ and any measurable set $B \subset W$ with $\mu(B) > 0$, we have that
\[
\nu(f^k(B)) = \sum_{n \in \Z} \mu_n(f^k(B) \cap f^{-n}(W)) = \mu_{-k}(f^k(B)).
\]
Hence, (\ref{Dis-Eq4}) can be rewritten as
\begin{equation}\label{Dis-Eq5}
\frac{1}{K}\, \frac{\nu(f^k(W))}{\nu(W)} \leq \frac{\nu(f^k(B))}{\nu(B)} \leq K\, \frac{\nu(f^k(W))}{\nu(W)}\cdot
\end{equation}
Since $(X,\fM,\nu,f)$ is of bounded distortion on $W$ exactly when there is a constant $K > 0$ such that (\ref{Dis-Eq5}) holds
for all $k \in \Z$ and all measurable set $B \subset W$ with $\nu(B) > 0$, we are done.
\end{proof}

Given Banach spaces $Y$ and $Z$, and given operators $T : Y \to Y$ and $S : Z \to Z$, 
recall that $S$ is said to be a {\em linear factor} of $T$ if there is a {\em linear factor map} (i.e., a bounded linear surjection)
$\Phi : Y \to Z$ for which the following diagram commutes:

\[ 
 \xymatrix{
 Y
 \ar[rrr]^T
 \ar[d]_\Phi & & &
 Y
 \ar[d]^{\Phi}\\
 Z \ar[rrr]^{S} & & & Z }
\]

\begin{theorem}\label{Dis-T5}
If the system $(X,\fM,\mu, f, w)$ is of bounded distortion on $W$, then the weighted composition operator $C_{w,f}$ on $L^p(\mu)$ 
has the bilateral weighted backward shift $B_u$ on $\ell^p(\Z)$ as a linear factor, where the sequence $u = (u_k)_{k \in \Z}$ is defined by
\[
u_k = \Big(\frac{\mu_{-k+1}(f^{k-1}(W))}{\mu_{-k}(f^k(W))}\Big)^\frac{1}{p}
\]
and the linear factor map is $\Gamma \circ \Pi$, where $\Pi : L^p(\mu) \to L^p(\nu)$ is the isometric isomorphism 
given by (\ref{Dis-Eq2}) and $\Gamma : L^p(\nu) \to \ell^p(\Z)$ is the bounded linear surjection defined by
\[
\Gamma(\varphi)(k) = \frac{\mu_{-k}(f^k(W))^\frac{1}{p}}{\mu(W)} \int_W \varphi \circ f^k\, d\mu.
\]
Moreover, $C_{w,f}$ has Property P if and only if $B_u$ has Property P, where Property P can be any one of the following properties: 
(1) Li-Yorke chaos; (2) hypercyclicity; (3) topological mixing; (4) Devaney chaos; (5) frequent hypercyclicity; 
(6) expansivity; (7) average expansivity; (8) uniform expansivity; (9) the shadowing property.
\end{theorem}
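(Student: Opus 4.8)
The plan is to reduce the whole statement to the already established \emph{unweighted} dissipative case by means of the conjugation furnished by Theorem~\ref{Dis-T1}. That theorem tells us that the map $\Pi$ of~\eqref{Dis-Eq2} is an isometric isomorphism of $L^{p}(\mu)$ onto $L^{p}(\nu)$ with $\Pi\circ C_{w,f}=C_{f}\circ\Pi$. Since $\Pi$ is an isometric isomorphism, $C_{w,f}$ on $L^{p}(\mu)$ and $C_{f}$ on $L^{p}(\nu)$ share each of the properties (1)--(8) (all of which are invariant under linear conjugacy by an isometric isomorphism), and, moreover, if $B_{u}$ is a linear factor of $C_{f}$ via a factor map $\Phi$, then $B_{u}$ is a linear factor of $C_{w,f}$ via $\Phi\circ\Pi$. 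By Lemma~\ref{Dis-L1} and the computations in its proof, the unweighted system $(X,\fM,\nu,f)$ is again a dissipative invertible $p$-measurable system with the same wandering set $W$, it is of bounded distortion on $W$, and one has
\[
\nu(f^{j}(W))=\mu_{-j}(f^{j}(W))\quad (j\in\Z),\qquad \nu|_{W}=\mu|_{W},\qquad 0<\nu(W)=\mu(W)<\infty .
\]
Feeding these identities into the formulas of the statement, the sequence $u$ becomes $u_{k}=\bigl(\nu(f^{k-1}(W))/\nu(f^{k}(W))\bigr)^{1/p}$ and the map $\Gamma$ becomes $\Gamma(\varphi)(k)=\nu(f^{k}(W))^{1/p}\,\nu(W)^{-1}\int_{W}\varphi\circ f^{k}\,d\nu$; that is, they are exactly the weighted backward shift and the factor map canonically attached to the unweighted dissipative system $(X,\fM,\nu,f)$.

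I would then invoke the unweighted theory: for a dissipative composition operator $C_{f}$ on $L^{p}(\nu)$ of bounded distortion on a wandering set $W$ with $0<\nu(W)<\infty$, the bilateral weighted backward shift $B_{u}$ above is a linear factor of $C_{f}$ through the map $\Gamma$ above, and $C_{f}$ has Property~P if and only if $B_{u}$ has Property~P for each of (1)--(8); this is contained in~\cite{AniDarMai22} (see also~\cite{AniDarMai21,UDarBPir21,MMai22,MMai23}). Combining this with the first paragraph: $\Gamma\circ\Pi$ is a bounded linear surjection of $L^{p}(\mu)$ onto $\ell^{p}(\Z)$ satisfying $(\Gamma\circ\Pi)\circ C_{w,f}=\Gamma\circ C_{f}\circ\Pi=B_{u}\circ(\Gamma\circ\Pi)$, so $B_{u}$ is a linear factor of $C_{w,f}$ via $\Gamma\circ\Pi$; and for each Property~P in the list, $C_{w,f}$ has P $\iff$ $C_{f}$ has P (Theorem~\ref{Dis-T1}) $\iff$ $B_{u}$ has P, which is the ``Moreover'' part. (For the expansivity properties (6) and (7) one may alternatively read off the equivalence by applying Theorem~\ref{ExpLp-T1} of the present paper to $C_{f}$ on $L^{p}(\nu)$ and comparing with the known characterizations of (uniform) expansivity for weighted shifts in~\cite{BerCirDarMesPuj18}.)

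If a self-contained verification of the factor data is wanted, it runs as follows. Linearity of $\Gamma$ is clear. Surjectivity holds because on the closed subspace $V=\ov{\spa}\{\rchi_{f^{k}(W)}:k\in\Z\}$ one has $\bigl\|\sum_{k}a_{k}\rchi_{f^{k}(W)}\bigr\|_{p}^{p}=\sum_{k}|a_{k}|^{p}\nu(f^{k}(W))$ and $\Gamma(\rchi_{f^{k}(W)})=\nu(f^{k}(W))^{1/p}e_{k}$ (with $e_{k}$ the $k$-th unit vector of $\ell^{p}(\Z)$), so that $\Gamma|_{V}$ is already an isometric isomorphism onto $\ell^{p}(\Z)$; here one uses $0<\nu(f^{k}(W))<\infty$, which follows from $0<\mu(W)<\infty$, non-singularity of $f$, and the operator bounds. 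Boundedness of $\Gamma$ on all of $L^{p}(\nu)$ follows from Jensen's inequality on $W$ together with the Radon--Nikodym bound $\dfrac{\mu(W)}{K\,\nu(f^{k}(W))}\le\dfrac{d\bigl(f^{k}_{*}(\mu|_{W})\bigr)}{d\nu}\le\dfrac{K\,\mu(W)}{\nu(f^{k}(W))}$ on each layer $f^{k}(W)$, which is a restatement of the bounded-distortion inequality~\eqref{Dis-Eq4}. Finally, the intertwining $\Gamma\circ C_{f}=B_{u}\circ\Gamma$ reduces, after writing $\Gamma(C_{f}\varphi)(k)=\nu(f^{k}(W))^{1/p}\nu(W)^{-1}\int_{W}\varphi\circ f^{k+1}\,d\nu$, to matching the normalizing constants against $u$; for $C_{w,f}$ directly the same computation goes through once one uses the telescoping identity $w^{(-(k+1))}\circ f^{k+1}=(\wt w\circ f^{k+1})\cdot(w^{(-k)}\circ f^{k})$ on $X$, which is precisely what makes the weight carried along by $\Pi$ collapse to the scalar sequence $u$.

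I expect the main obstacle to be organizational rather than conceptual: one must check carefully that the \emph{explicit} objects $u$ and $\Gamma\circ\Pi$ written in the statement are exactly the ones delivered by the unweighted theory, which comes down to the measure identity $\nu(f^{j}(W))=\mu_{-j}(f^{j}(W))$, the telescoping identity for the products $w^{(n)}$, and keeping the index convention for $B_{u}$ consistent with that of Section~\ref{S-ExpLp}. A secondary point worth stressing is that properties (1), (6), (7) and (8) (Li--Yorke chaos, expansivity, uniform expansivity, the shadowing property) are not in general inherited by linear factors, so the corresponding ``$\Leftarrow$'' implications in the ``Moreover'' part cannot be extracted from the factor structure alone; obtaining the ``Moreover'' part through the isometric conjugacy $C_{w,f}\cong C_{f}$ of Theorem~\ref{Dis-T1}, as above, side-steps this difficulty entirely.
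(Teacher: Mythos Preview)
Your proposal is correct and follows essentially the same route as the paper: conjugate $C_{w,f}$ to $C_f$ on $L^p(\nu)$ via the isometric isomorphism $\Pi$ of Theorem~\ref{Dis-T1}, use Lemma~\ref{Dis-L1} together with the identity $\nu(f^j(W))=\mu_{-j}(f^j(W))$ to recognize $u$ and $\Gamma$ as exactly the shift weights and factor map attached to the unweighted system $(X,\fM,\nu,f)$, and then invoke \cite{AniDarMai21,AniDarMai22} for both the factor structure and the equivalence of properties (1)--(8). Your additional self-contained verification of the factor data and the remark that the ``Moreover'' part should be obtained via the isometric conjugacy (rather than from the factor map alone) are sound elaborations, but the core argument matches the paper's.
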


\begin{figure}[h]
\centering
%\hspace*{1cm}
\begin{tikzpicture}[scale=0.25,>=stealth]

\node[right] at (25,21) {$C_{w,f}$};
  \node[right] at (10,20) {$L^p(\mu)$};
  \node[right] at (40,20) {$L^p(\mu)$};
   \node[right] at (25,11) {$C_{f}$};
\node[right] at (10,10) {$L^p(\nu)$};
  \node[right] at (40,10) {$L^p(\nu)$};
  \node[right] at (25,1) {$B_u$};
  \node[right] at (10,0) {$\ell^p(\Z)$};
  \node[right] at (40,0) {$\ell^p(\Z)$};
  \draw[double, ->] (15,20) -- (40,20);
  \draw[double, ->] (15,10) -- (40,10);
  \draw[double, ->] (15,0) -- (40,0);
  \draw[double, ->] (12,18) -- (12,12);
   \node[right] at (9,15) {$\Pi$};
    \node[right] at (39,15) {$\Pi$};
     \node[right] at (9,5) {$\Gamma$};
      \node[right] at (39,5) {$\Gamma$};
   \draw[double, ->] (42,18) -- (42,12);
    \draw[double, ->] (12,8) -- (12,2);
   \draw[double, ->] (42,8) -- (42, 2);
  \end{tikzpicture}
\caption{Dissipative composition dynamical system of bounded distortion }
\end{figure}

\begin{proof}
By Theorem~\ref{Dis-T1}, $\Pi : L^p(\mu) \to L^p(\nu)$ is an isometric isomorphism that 
establishes a conjugation between $C_{w,f}$ and $C_f$, that is,
\[
\Pi \circ C_{w,f} = C_f \circ \Pi.
\]
Since
\[
u_k = \Big(\frac{\mu_{-k+1}(f^{k-1}(W))}{\mu_{-k}(f^k(W))}\Big)^\frac{1}{p}  = \Big(\frac{\nu(f^{k-1}(W))}{\nu(f^k(W))}\Big)^\frac{1}{p}
\]
and 
\[
\Gamma(\varphi)(k) = \frac{\mu_{-k}(f^k(W))^\frac{1}{p}}{\mu(W)} \int_W \varphi \circ f^k\, d\mu
  = \frac{\nu(f^k(W))^\frac{1}{p}}{\nu(W)} \int_W \varphi \circ f^k\, d\nu,
\]
and since $(X,\fM,\nu,f)$ is of bounded distortion on $W$ (Lemma~\ref{Dis-L1}), it follows from \cite[Lemma~4.2.3]{AniDarMai21} 
(see also \cite{AniDarMai22}) that $\Gamma$ is a bounded linear surjection from $L^p(\nu)$ onto $\ell^p(\Z)$ such that
\[
\Gamma \circ C_f = B_u \circ \Gamma.
\]
In other words, $B_u$ is a linear factor of $C_f$ by means of the linear factor map $\Gamma$.
Thus, $\Gamma \circ \Pi$ is a bounded linear surjection from $L^p(\mu)$ onto $\ell^p(\Z)$ such that
\[
\Gamma \circ \Pi \circ C_{w,f} = B_u \circ \Gamma \circ \Pi,
\]
proving the first assertion of the theorem.

Now, in view of Theorem~\ref{Dis-T1}, 
\[
C_{w,f} \text{ has Property P on } L^p(\mu) \ \ \Longleftrightarrow \ \ C_f \text{ has Property P on } L^p(\nu).
\]
On the other hand, except for the property of average expansivity, \cite[Theorem~M]{AniDarMai22} asserts that
\[
C_f \text{ has Property P on } L^p(\nu) \ \ \Longleftrightarrow \ \ B_u \text{ has Property P on } \ell^p(\Z).
\]
Hence, to complete the proof of the last assertion of the theorem, it remains to prove that
\[
C_f \text{ is average expansive on } L^p(\nu) \ \ \Longleftrightarrow \ \ B_u \text{ is average expansive on } \ell^p(\Z).
\]

Suppose that $C_f$ is average expansive on $L^p(\nu)$.
It was shown in \cite[Lemma~3.4]{AniDarMai22} that the linear factor map $\Gamma : L^p(\nu) \to \ell^p(\Z)$ 
admits a {\em strong bounded selector}, which means that there exists a constant $L \geq 1$ such that,
for each $y \in \ell^p(\Z)$, there exists $\varphi_y \in \Gamma^{-1}(\{y\}) \subset L^p(\nu)$ with
\[
\|(B_u)^n(y)\|_p \leq L\, \|(C_f)^n(\varphi_y)\|_p \ \text{ and } \ \|(C_f)^n(\varphi_y)\|_p \leq L\, \|(B_u)^n(y)\|_p \ \ \text{ for all } n \in \Z.
\]
If $y \neq 0$ then $\varphi_y \neq 0$ and therefore
\[
\sup_{n \in \N} \Big(\frac{1}{2n+1} \sum_{j=-n}^n \|(B_u)^j(y)\|_p\Big) 
  \geq \frac{1}{L}\, \sup_{n \in \N} \Big(\frac{1}{2n+1} \sum_{j=-n}^n \|(C_f)^j(\varphi_y)\|_p\Big) = \infty,
\]
since $C_f$ is average expansive on $L^p(\nu)$. This proves that $B_u$ is average expansive on $\ell^p(\Z)$.

Conversely, suppose that $B_u$ is average expansive on $\ell^p(\Z)$.
By \cite[Proposition~15]{AlvBerMes21},
\[
\sup_{n \in \N} \Big(\frac{1}{n} \sum_{j=1}^n u_{-j+1} \cdots u_0\Big) = \infty \ \ \text{ or } \ \ 
\sup_{n \in \N} \Big(\frac{1}{n} \sum_{j=1}^n \frac{1}{u_1 \cdots u_j}\Big) = \infty.
\]
Since
\[
u_{-j+1} \cdots u_0 = \Big(\frac{\nu(f^{-j}(W))}{\nu(W)}\Big)^\frac{1}{p} \ \ \text {and } \ \ 
u_1 \cdots u_j = \Big(\frac{\nu(W)}{\nu(f^j(W))}\Big)^\frac{1}{p},
\]
we obtain
\[
\sup_{n \in \N} \Big(\frac{1}{n} \sum_{j=1}^n \nu(f^{-j}(W))^\frac{1}{p}\Big) = \infty \ \ \text{ or} \ \ 
\sup_{n \in \N} \Big(\frac{1}{n} \sum_{j=1}^n \nu(f^j(W))^\frac{1}{p}\Big) = \infty,
\]
which is equivalent to
\[
\sup_{n \in \N} \Big(\frac{1}{2n+1} \sum_{j=-n}^n \nu(f^{-j}(W))^\frac{1}{p}\Big) = \infty.
\]
Now, take $B \in \fM$ with $0 < \nu(B) < \infty$. 
There exists $k \in \Z$ such that $\nu(B \cap f^k(W)) > 0$.
Define $A = f^{-k}(B \cap f^k(W)) \subset W$. 
Since $(X,\fM,\nu,f)$ is of bounded distortion on $W$ (Lemma~\ref{Dis-L1}), there is a constant $K > 0$ such that
\[
\frac{\nu(f^j(A))}{\nu(A)} \geq \frac{1}{K} \frac{\nu(f^j(W))}{\nu(W)} \ \ \text{ for all } j \in \Z.
\]
Thus, 
\begin{align*}
\sup_{n \in \N} \Big(\frac{1}{2n+1} \sum_{j=-n}^n \nu(f^{-j}(B))^\frac{1}{p}\Big) 
  &\geq \sup_{n \in \N} \Big(\frac{1}{2n+1} \sum_{j=-n}^n \nu(f^{-j}(B\cap f^k(W)))^\frac{1}{p}\Big)\\
  &= \sup_{n \in \N} \Big(\frac{1}{2n+1} \sum_{j=-n}^n \nu(f^{-j+k}(A))^\frac{1}{p}\Big)\\
  &\geq \Big(\frac{\nu(A)}{K\,\nu(W)}\Big)^\frac{1}{p} \sup_{n \in \N} \Big(\frac{1}{2n+1} \sum_{j=-n}^n \nu(f^{-j+k}(W))^\frac{1}{p}\Big)
  = \infty.
\end{align*}
Therefore, by Theorem~\ref{ExpLp-T1}(b), $C_f$ is average expansive on $L^p(\nu)$.
\end{proof}

\begin{theorem}
If the system $(X,\fM,\mu,f,w)$ is of bounded distortion, then the following assertions are equivalent:
\begin{itemize}
\item [\rm (i)] $C_{w,f}$ is frequently  hypercyclic on $L^p(\mu)$;
\item [\rm (ii)] $C_{w,f}$ is frequently recurrent on $L^p(\mu)$;
\item [\rm (iii)] $C_{w,f}$ is Devaney chaotic on $L^p(\mu)$.
\end{itemize}

\end{theorem}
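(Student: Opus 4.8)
The plan is to reduce everything to the corresponding statement for the bilateral weighted backward shift $B_u$ on $\ell^p(\Z)$ via the factor/conjugation machinery already in place, and then invoke the known equivalences for weighted shifts. Concretely, by Theorem~\ref{Dis-T5} (whose hypothesis of bounded distortion is exactly what we are assuming), the operator $C_{w,f}$ has $B_u$ as a linear factor through the bounded linear surjection $\Gamma \circ \Pi$, and moreover $C_{w,f}$ has Property P if and only if $B_u$ has Property P, where Property P ranges over a list that includes frequent hypercyclicity and Devaney chaos. So the three assertions (i), (ii), (iii) for $C_{w,f}$ translate into the same three assertions for $B_u$ on $\ell^p(\Z)$, provided we also know that frequent recurrence transfers. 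For (iii)$\Leftrightarrow$ for $C_f$ we already have Theorem~\ref{Dis-T1}, and for (i) and (iii) the transfer between $C_f$ and $B_u$ is part of Theorem~\ref{Dis-T5}; the one extra ingredient needed is that frequent recurrence of $C_{w,f}$ is equivalent to frequent recurrence of $C_f$, which is immediate from the conjugation $\Pi$ in Theorem~\ref{Dis-T1} since conjugacies preserve all dynamical properties.

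Thus the first step is to record these reductions: (i) for $C_{w,f}$ $\Leftrightarrow$ (i) for $C_f$ (conjugacy, Theorem~\ref{Dis-T1}) $\Leftrightarrow$ (i) for $B_u$ (factor map, Theorem~\ref{Dis-T5}); similarly (ii) for $C_{w,f}$ $\Leftrightarrow$ (ii) for $C_f$; and (iii) for $C_{w,f}$ $\Leftrightarrow$ (iii) for $C_f$ $\Leftrightarrow$ (iii) for $B_u$. It then remains to prove, for the single weighted shift $B_u$ on $\ell^p(\Z)$, that frequent hypercyclicity, frequent recurrence, and Devaney chaos are all equivalent. The second step is therefore a statement purely about bilateral weighted backward shifts on $\ell^p(\Z)$.

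For that statement I would appeal to the well-understood characterizations of these properties for $B_u$ in terms of the weight sequence $u = (u_k)_{k \in \Z}$: frequent hypercyclicity of $B_u$ is equivalent to the unconditional convergence of the series $\sum_{n \geq 1} \big(\prod_{j=1}^{n} u_{k+j}\big)^{-p}$ together with $\sum_{n \geq 1} \big(\prod_{j=0}^{n-1} u_{k-j}\big)^{p}$ (summable appropriately over $k$), and the same summability condition characterizes Devaney chaos (existence of a dense set of periodic points plus transitivity) for bilateral weighted shifts — these are the classical criteria, and in fact for bilateral weighted shifts frequent hypercyclicity, topological mixing, Devaney chaos and the existence of an $\ell^p$ eigenvector family all coincide. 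Since frequent recurrence is implied by frequent hypercyclicity and, for these shift operators, is itself equivalent to the same summability condition, all three are equivalent. The cleanest route is to cite the relevant known theorem giving ``frequent hypercyclicity $=$ frequent recurrence $=$ Devaney chaos'' for bilateral weighted backward shifts on $\ell^p(\Z)$ (as already invoked implicitly when Theorem~\ref{Dis-T3} and the discussion preceding it are set up), and then pull it back through $\Gamma \circ \Pi$.

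The main obstacle is making sure the transfer of \emph{frequent recurrence} through the factor map $\Gamma$ is legitimate: factor maps transfer hypercyclicity-type properties downward (from $T$ to its factor $S$) but not automatically upward, so one cannot conclude frequent recurrence of $C_f$ from that of $B_u$ by the factor map alone. This is why the argument must route frequent recurrence through the \emph{isometric isomorphism} $\Pi$ (which is a genuine conjugacy and hence transfers everything in both directions) to get the equivalence between $C_{w,f}$ and $C_f$, and then rely on the fact that the equivalence (i)$\Leftrightarrow$(ii)$\Leftrightarrow$(iii) for the \emph{unweighted} dissipative composition operator $C_f$ of bounded distortion (equivalently, for $B_u$) is already known — so the only genuinely new content is the bookkeeping of the reductions, which is routine. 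I would therefore present the proof as: apply Theorem~\ref{Dis-T1} to replace $C_{w,f}$ by $C_f$ on $L^p(\nu)$ for all three properties; invoke Lemma~\ref{Dis-L1} to note $(X,\fM,\nu,f)$ is again of bounded distortion; and then cite the known equivalence of frequent hypercyclicity, frequent recurrence and Devaney chaos for dissipative composition operators of bounded distortion (equivalently via Theorem~\ref{Dis-T5}, for the associated weighted shift $B_u$).
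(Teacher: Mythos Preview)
Your final summary is correct and matches the paper's proof almost exactly: transfer all three properties from $C_{w,f}$ on $L^p(\mu)$ to $C_f$ on $L^p(\nu)$ via the isometric conjugacy $\Pi$ of Theorem~\ref{Dis-T1}, and then invoke the known equivalences for unweighted dissipative composition operators (the paper cites \cite[Theorem~2.9]{AniMaiSeo24} for (i)$\Leftrightarrow$(ii) and \cite[Theorem~3.7]{UDarBPir21} for (i)$\Leftrightarrow$(iii)). Your explicit mention of Lemma~\ref{Dis-L1} to check that $(X,\fM,\nu,f)$ inherits bounded distortion is a good addition that the paper leaves implicit.

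The main difference is organizational: the paper goes directly to $C_f$ and cites two external results, whereas you spend most of the write-up on an initial detour through $B_u$ and weighted-shift characterizations before correctly diagnosing the obstacle (factor maps only push frequent recurrence downward) and abandoning that route. The sketched summability conditions for frequent hypercyclicity/recurrence of $B_u$ are stated imprecisely and are in any case unnecessary. For a clean proof, drop the $B_u$ discussion entirely and keep only your last paragraph.
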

\begin{proof}

By Theorem~\ref{Dis-T1}, $C_{w,f}$ is frequently hypercyclic, frequently recurrent or Devaney chaotic on $L^p(\mu)$ 
if and only if $C_{f}$ is frequently hypercyclic, frequently recurrent or Devaney chaotic on $L^p(\nu)$, respectively.
On the other hand,  by \cite[Theorem~2.9]{AniMaiSeo24}, $C_{f}$ is frequently recurrent on $L^p(\nu)$ 
if and only if $C_{f}$ is frequently hypercyclic on $L^p(\nu)$.
Moreover,  by \cite[Theorem 3.7]{UDarBPir21}, $C_{f}$ is frequently hypercyclic on $L^p(\nu)$ 
if and only if $C_{f}$ is Devaney chaotic on $L^p(\nu)$.
\end{proof}

%%%%%%%%%%%%%%%%%%%%%%%%%%%%%%%%%%%%%%%%%%%%%%%%%%%%%%%%%%%%%%%
%%%%%%%%%%%%%%%%%%%%%%%%%%%%%%%%%%%%%%%%%%%%%%%%%%%%%%%%%%%%%%%

\section*{Declarations}

\noindent
{\bf Conflict of Interest} \ The authors declare that they have no conflict of interest.

\medskip\noindent
{\bf Data Availability} \ Not applicable.

%%%%%%%%%%%%%%%%%%%%%%%%%%%%%%%%%%%%%%%%%%%%%%%%%%%%%%%%%%%%%%%%%%%%%%
%%%%%%%%%%%%%%%%%%%%%%%%%%%%%%%%%%%%%%%%%%%%%%%%%%%%%%%%%%%%%%%%%%%%%%

\section*{Acknowledgements}

The first author was partially supported by CNPq -- Project {\#}308238/2021-4, and by CAPES -- Finance Code 001.
The first and the second authors were partially supported by Project PID2022-139449NB-I00, 
funded by MCIN/AEI/10.13039/501100011033/FEDER, UE.

%%%%%%%%%%%%%%%%%%%%%%%%%%%%%%%%%%%%%%%%%%%%%%%%%%%%%%%%%%%%%%%%%%%%%%
%%%%%%%%%%%%%%%%%%%%%%%%%%%%%%%%%%%%%%%%%%%%%%%%%%%%%%%%%%%%%%%%%%%%%%

\bigskip\bigskip\noindent
{\sc Nilson C. Bernardes Jr.}

\medskip\noindent
Departamento de Matem\'atica Aplicada, Instituto de Matem\'atica, Universidade Federal do Rio de Janeiro, 
Caixa Postal 68530, RJ 21941-909, Brazil.\\
\textit{ e-mail address}: bernardes@im.ufrj.br

\bigskip\medskip\noindent {\sc Antonio Bonilla}

\medskip\noindent
Departamento de An\'alisis Matem\'atico and Instituto de Matem\'aticas y Aplicaciones (IMAULL), 
Universidad de La Laguna, C/Astrof\'{\i}sico Francisco S\'anchez, s/n, 38721 La Laguna, Tenerife, Spain\\
\textit{ email address:} abonilla@ull.edu.es

\bigskip\medskip\noindent
{\sc Jo\~ao V. A. Pinto}

\medskip\noindent
Departamento de Matem\'atica Aplicada, Instituto de Matem\'atica, Universidade Federal do Rio de Janeiro, 
Caixa Postal 68530, RJ 21941-909, Brazil.\\
\textit{ e-mail address}: joao.pinto@ufu.br


\begin{thebibliography}{99}

\bibitem{CAlvJHen25} C. F. \'Alvarez and J. Henr\'iquez-Amador,
    {\it Dynamical properties for composition operators on $H^2(\C_+)$}, 
    Bull.\ Braz.\ Math.\ Soc.\ (N.S.) {\bf 56} (2025), no.\ 1, Paper No.\ 10, 13 pp.

\bibitem{AlvBerMes21} F. F. Alves, N. C. Bernardes Jr. and A. Messaoudi,
    {\it Chain recurrence and average shadowing in dynamics}, Monatsh.\ Math.\ {\bf 196} (2021), no.\ 4, 665--697.

\bibitem{AseJorKal25} V. Asensio, E. Jord\'a and T. Kalmes,
    {\it Power boundedness and related properties for weighted composition operators on $\mathcal{S}(\R^d)$},
    J. Funct.\ Anal.\ {\bf 288} (2025), no.\ 3, Paper No.\ 110745, 33 pp.

\bibitem{BagDasOsaPod25} P. Bag, P. K. Das, H. Osaka and S. Podder,
    {\it Variants of expansivity in linear dynamics}, Linear Algebra Appl.\ {\bf 720} (2025), 256--271.

\bibitem{BerBonPin26} N. C. Bernardes Jr., A. Bonilla and J. V. A. Pinto,
    {\it On the dynamics of weighted composition operators}, J. Math.\ Anal.\ Appl.\ {\bf 555} (2026), no.\ 2, Paper No.\ 130171, 45 pp.

\bibitem{BerCarDarFavPer25} N. C. Bernardes Jr., B. M. Caraballo, U. B. Darji, V. V. F\'avaro and A. Peris,
    {\it Generalized hyperbolicity, stability and expansivity for operators on locally convex spaces}, 
    J. Funct.\ Anal.\ {\bf 288} (2025), no.\ 2, Paper No.\ 110696, 51 pp.

\bibitem{BerCirDarMesPuj18} N. C. Bernardes Jr., P. R. Cirilo, U. B. Darji, A. Messaoudi and E. R. Pujals,
    {\it Expansivity and shadowing in linear dynamics}, J. Math.\ Anal.\ Appl.\ {\bf 461} (2018), no.\ 1, 796--816.

\bibitem{BerMarRod} N. C. Bernardes Jr., F. Mart\'inez-Gim\'enez and F. Rodenas,
    {\it On notions of expansivity for operators on locally convex spaces}, 
    Rev.\ R. Acad.\ Cienc.\ Exactas F\'is.\ Nat.\ Ser.\ A Mat.\ RACSAM {\bf 120} (2026), no.\ 2, Paper No.\ 47, 23 pp.

\bibitem{NBerAMes21} N. C. Bernardes Jr. and A. Messaoudi,
    {\it Shadowing and structural stability for operators}, Ergodic Theory Dynam.\ Systems {\bf 41} (2021), no.\ 4, 961--980.

\bibitem{NBerFVas25} N. C. Bernardes Jr. and F. M. Vasconcellos,
    {\it Li-Yorke chaotic weighted composition operators},
    Rev.\ R. Acad.\ Cienc.\ Exactas F\'is.\ Nat.\ Ser.\ A Mat.\ RACSAM {\bf 119} (2025), no.\ 1, Paper No.\ 13, 22 pp.

\bibitem{KChe24} K.-Y. Chen, 
    {\it Distributional chaos for weighted translation operators on groups}, 
    J. Math.\ Anal.\ Appl.\ {\bf 538} (2024), no.\ 1, Paper No.\ 128392, 18 pp.

\bibitem{CirGolPuj21} P. R. Cirilo, B. Gollobit and E. R. Pujals,
    {\it Dynamics of generalized hyperbolic linear operators}, Adv.\ Math.\ {\bf 387} (2021), Paper No.\ 107830, 37 pp.

\bibitem{AniDarMai21} E. D'Aniello, U. B. Darji and M. Maiuriello,
    {\it Generalized hyperbolicity and shadowing in $L^p$ spaces}, J. Differential Equations {\bf 298} (2021), 68--94.

\bibitem{AniDarMai22} E. D'Aniello, U. B. Darji and M. Maiuriello,
    {\it Shift-like operators on $L^p(X)$}, J. Math.\ Anal.\ Appl.\ {\bf 515} (2022), Paper No.\ 126393, 13 pp.

\bibitem{EAniMMai23} E. D'Aniello and M. Maiuriello,
    {\it On spaceability of shift-like operators on $L^p$}, J. Math.\ Anal.\ Appl.\ {\bf 526} (2023), no.\ 1, Paper No.\ 127177, 10 pp.

\bibitem{AniMaiSeo24} E. D'Aniello, M. Maiuriello and J. B. Seoane-Sep\'ulveda,
    {\it The interplay between recurrence and hypercyclicity in dissipative contexts},
    Rev.\ R. Acad.\ Cienc.\ Exactas F\'is.\ Nat. Ser.\ A Mat. RACSAM {\bf 118} (2024), no.\ 1, Paper No.\ 30, 9 pp.

\bibitem{UDarBPir21} U. B. Darji and B. Pires, 
    {\it Chaos and frequent hypercyclicity for composition operators}, Proc.\ Edinb.\ Math.\ Soc.\ (2) {\bf 64} (2021), no.\ 3, 513--531.

\bibitem{MEis66} M. Eisenberg,
    {\it Expansive automorphisms of  finite dimensional vector spaces}, Fund.\ Math.\ {\bf 59} (1966), 307-312.

\bibitem{MEisJHed70} M. Eisenberg and J. H. Hedlund,
    {\it Expansive automorphisms of Banach spaces}, Pacific J. Math.\ {\bf 34} (1970), no.\ 3, 647--656.

\bibitem{DGomKGro25} D. Gomes and K.-G. Grosse-Erdmann, 
    {\it Kitai's Criterion for composition operators}, J. Math.\ Anal.\ Appl.\ {\bf 547} (2025), no.\ 2, Paper No.\ 129347, 28 pp. 

\bibitem{KGroAPer11} K.-G. Grosse-Erdmann and A. Peris Manguillot,
    {\it Linear Chaos}, Springer, Berlin, 2011.

\bibitem{JHed71} J. H. Hedlund,
    {\it Expansive automorphisms of Banach spaces, II}, Pacific J. Math.\ {\bf 36} (1971), no.\ 3, 671--675.

\bibitem{UKre85} U. Krengel,
    {\it Ergodic Theorems}, De Gruyter Stud.\ Math., 6, Walter de Gruyter $\&$ Co., Berlin, 1985. With a supplement by Antoine Brunel.

\bibitem{KLeeCMor22} K. Lee and C. A. Morales,
    {\it Hyperbolicity, shadowing, and bounded orbits}, Qual.\ Theory Dyn.\ Syst.\ {\bf 21} (2022), no.\ 3, Paper No.\ 61, 13 pp.

\bibitem{MMai22} M. Maiuriello,
    {\it Expansivity and strong structural stability for composition operators on $L^p$ spaces},
    Banach J. Math.\ Anal.\ {\bf 16} (2022), no.\ 4, Paper No.\ 51, 20 pp.

\bibitem{MMai23} M. Maiuriello,
    {\it On the existence of infinite-dimensional closed subspaces of frequently hypercyclic vectors for $T_f$},
    Real Anal.\ Exch.\ {\bf 48} (2023), no.\ 2, 409--424.

\bibitem{MMaz00} M. Mazur,
    {\it Hyperbolicity, expansivity and shadowing for the class of normal operators}, Funct.\ Differ.\ Equ.\ {\bf 7} (2000), no.\ 1-2, 147--156.

\bibitem{WUtz50} W. R. Utz,
    {\it Unstable homeomorphisms}, Proc.\ Amer.\ Math.\ Soc.\ {\bf 1} (1950), 769--774.

\end{thebibliography}
\end{document}